\setlist{itemsep=-0.5pt,topsep=.6ex,leftmargin=5ex} 
\theoremstyle{plain}
\newtheorem{theorem}{Theorem}
\newtheorem{lemma}[theorem]{Lemma}
\newtheorem{corollary}[theorem]{Corollary}
\theoremstyle{definition}
\def\IEEEsubnumstart{\IEEEyesnumber\IEEEyessubnumber*}
\def\<{\langle}
\def\>{\rangle}
\def\...{,\ldots,}
\def\refeq#1{{(\ref{eq:#1})}}
\def\phi{\varphi}
\def\bb#1{\mathbb{#1}} 
\def\defmathop#1{\expandafter\newcommand\csname#1\endcsname{\mathop{\rm #1}}}
\def\Span{\mathop{\rm span}}
\title{Relative Interior Rule in Block-Coordinate Minimization}
\author{Tom{\'a}{\v s} Werner, Daniel Pr{\r u}{\v s}a \\[1ex]
Department of Cybernetics, Faculty of Electrical Engineering, Czech Technical University}
\begin{document}

\maketitle

\section{Introduction}

(Block-)coordinate minimization is an iterative optimization method
which in every iteration finds a global minimum of the objective over
a variable or a subset of variables, while keeping the remaining
variables constant. For some problems, coordinate minimization converges
to a global minimum. This class includes unconstrained problems with
convex differentiable objective function \cite[\S2.7]{Bertsekas99} or
convex objective function whose non-differentiable part is
separable~\cite{Tseng:2001}. For general convex problems, the method
need not converge to a global minimum but only to a local one, where
`local' is meant with respect to moves along (subsets of) coordinates.

For large-scale non-differentiable convex problems, (block-)coordinate
minimization can be an acceptable option despite its inability to
converge to a global minimum.  An example is a class of methods to solve
the linear programming relaxation of the discrete energy minimization
problem (also known as MAP inference in graphical models).
These methods apply (block-)coordinate minimization to various forms
of the dual linear programming relaxation. Examples are max-sum
diffusion \cite{Kovalevsky-diffusion,Schlesinger-2011,Werner-PAMI07},
TRW-S~\cite{Kolmogorov06}, MPLP~\cite{Globerson08}, and
SRMP~\cite{Kolmogorov-PAMI-2015}. For many problems from computer
vision, it has been
observed~\cite{Szeliski:study:PAMI:2008,Kappes-study-2015} that TRW-S
converges faster than the competing methods and its fixed points are
often not far from global minima, especially for large sparse
instances.

When block-coordinate minimization is applied to a general convex
problem, in every iteration the minimizer over the current coordinate
block need not be unique and therefore a single minimizer must be
chosen. These choices can significantly affect the quality of the
achieved local minima. We propose that this minimizer should always be
chosen from the {\em relative interior\/} of the set of all minimizers
over the current block. Indeed, it can be easily verified that max-sum
diffusion satisfies this condition. We show that block-coordinate
minimization methods satisfying this condition are not worse, in a
certain precise sense, than any other block-coordinate minimization
methods.

\section{Main Results}
\label{sec:descent}

For brevity, we will use
\begin{equation}
M(X,f) 
= \{\, x\in X \mid f(x)\le f(y) \; \forall y\in X \,\}
\label{eq:M}
\end{equation}
to denote the set of all global minima of a function $f{:}\ Y\to\bb R$
on a set $X\subseteq Y$.

Suppose we want to
minimize a convex function $f{:}\ V\to\bb R$ on a closed convex set
$X\subseteq V$ where $V$~is a finite-dimensional vector space
over~$\bb R$. For that, we consider a coordinate-free generalization
of block-coordinate minimization. Let $\cal I$ be a finite set of
subspaces of~$V$, which represent search directions.
Having an estimate~$x_n$ of the minimum, the next estimate~$x_{n+1}$
is always chosen such that
\begin{equation}
x_{n+1} \in M(X\cap(x_n+I_n),f)
\label{eq:altopt}
\end{equation}
for some $I_n\in{\cal I}$.
Clearly, $f(x_{n+1})\le f(x_n)$.  A point $x\in X$ satisfying
\begin{equation}
x \in M(X\cap(x+I),f) \qquad\forall I\in{\cal I}
\label{eq:Ilocopt}
\end{equation}
has the property that $f$~cannot be improved by moving from~$x$
within~$X$ along any single subspace from~$\cal I$.  We call such a
point a {\em local minimum\/} of~$f$ on~$X$ with respect to~$\cal I$.
When $\cal I$ and/or $(X,f)$ is clear from context, we will speak only
about a local minimum of~$f$ on~$X$ or just a local minimum. Note that
the term `local minimum' is used here in a different meaning than is
usual in optimization and calculus.

Coordinate minimization and block-coordinate minimization are special
cases of this formulation. In the former, we have $V=\bb R^d$ and
${\cal I}=\{\Span\{e_1\}\...\Span\{e_d\}\}$ where $e_i$~denotes the
$i$th vector of the standard basis of~$\bb R^d$. In the latter,
we have $V=\bb R^d$ and each element of~$\cal I$ is the span of a subset of
the standard basis of~$\bb R^d$.

Recall \cite{Rockafellar-1970,Hiriart-book-2004} that the {\em
  relative interior\/} of a convex set $X\subseteq V$, denoted by
$\ri X$, is the topological interior of~$X$ with respect to the affine
hull of~$X$.  We propose to modify condition~\refeq{altopt} such that the
minimum is always chosen from the relative interior of the current
optimal set. Thus, \refeq{altopt}~changes to
\begin{equation}
x_{n+1} \in \ri M(X\cap(x_n+I_n),f) .
\label{eq:altopt-ri}
\end{equation}
A point~$x_{n+1}$ always exists because the relative interior of any
non-empty convex set is non-empty. We call a point $x\in X$ that
satisfies
\begin{equation}
x \in \ri M(X\cap(x+I),f) \qquad\forall I\in{\cal I}
\label{eq:Ilocopt-ri}
\end{equation}
an {\em interior local minimum\/} of~$f$ on~$X$ with respect
to~$\cal I$.  Clearly, every interior local minimum is a local
minimum.

In our analysis, another type of local minimum will naturally appear:
{\em pre-interior local minimum\/}.
It will be precisely defined later; informally, it is only a finite
number of iterations~\refeq{altopt-ri} away from an interior local
minimum.

Consider a sequence $(x_n)_{n\in\bb N}$ satisfying~\refeq{altopt}
resp.~\refeq{altopt-ri}, where $\bb N=\{1,2,\ldots\}$ denotes the
positive integers. To ensure that each search direction is always
visited again after a finite number of iterations, we assume that the
sequence $(I_n)_{n\in\bb N}$ contains each element of~$\cal I$ an
infinite number of times. For brevity, we will often write only
$(x_n)$ and $(I_n)$ instead of $(x_n)_{n\in\bb N}$ and
$(I_n)_{n\in\bb N}$. The following facts, proved in the sequel, show
that methods satisfying~\refeq{altopt-ri} are not worse, in a precise
sense, than methods satisfying~\refeq{altopt}:
\begin{itemize}
\item For every sequence $(x_n)$ satisfying~\refeq{altopt-ri}, if $x_1$~is an interior
local minimum then $x_n$~is an interior local minimum for all~$n$.
\item For every sequence $(x_n)$ satisfying~\refeq{altopt-ri}, if $x_1$~is a
pre-interior local minimum then $x_n$~is an
interior local minimum for some~$n$.
\item For every sequence $(x_n)$ satisfying~\refeq{altopt}, if $x_1$~is a pre-interior
local minimum then $f(x_n)=f(x_1)$ for all~$n$.
\item For every sequence $(x_n)$ satisfying~\refeq{altopt-ri}, if
$x_1$~is not a pre-interior local minimum then $f(x_n)<f(x_1)$ for some~$n$.
\end{itemize}

To illustrate this, consider an example of coordinate minimization
applied on a simple linear program (see the picture below). Let
$V=\bb R^2$, $X=\conv\{(1,0),(3,0),(3,1),(0,4)\}$,
$f(x)=\langle -e_1,x\rangle$ (i.e., $f$~is constant vertically and
decreases to the right), and ${\cal I}=\{\Span\{e_1\},\Span\{e_2\}\}$.
The set of global minima is the line segment $[(3,0),(3,1)]$, the set
of local minima is $[(3,0),(3,1)]\cup[(0,4),(3,1)]$, the set of
interior local minima is $\{(0,4)\}\cup\ri[(3,0),(3,1)]$, and the set
of pre-interior local minima is $\{(0,4)\}\cup[(3,0),(3,1)]$.  The
thick polyline shows the first few points of a sequence $(x_n)$
satisfying~\refeq{altopt-ri}, where the sequence $(I_n)$ alternates
between the two subspaces from~$\cal I$.  When starting from any point
$x_1\in X\setminus\{(0,4)\}$, every sequence $(x_n)$
satisfying~\refeq{altopt-ri} leaves any non-interior local minimum
after a finite number of iterations, while improving the objective
function. Informally, this is because when the objective cannot be
decreased by moving along any single subspace from~$\cal I$,
condition~\refeq{altopt-ri} at least enforces the point to move to a
face of~$X$ of a higher dimension (if such a face exists), providing
thus `more room' to hopefully decrease the objective in future
iterations. In contrast, condition~\refeq{altopt} allows a sequence
$(x_n)$ to stay in any (possibly non-interior) local minimum forever.
Of course, when starting from $x_1=(0,4)$, every sequence
satisfying~\refeq{altopt} will stay in~$x_1$ forever. This just
confirms the well-known fact that for some non-smooth convex problems,
coordinate minimization can get stuck in a point that is not a global
minimum.

\begin{center}
\includegraphics[scale=.4]{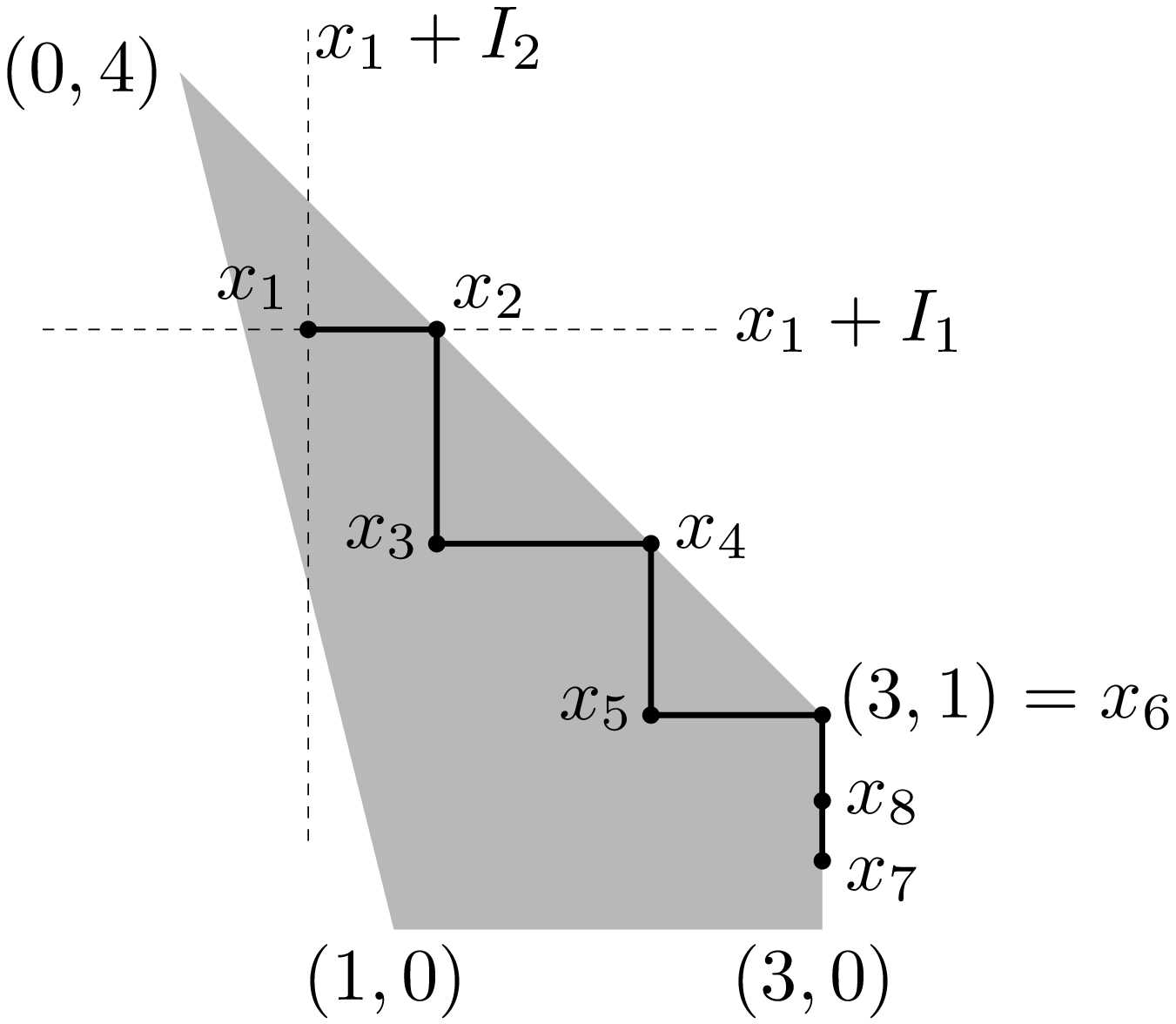}
\end{center}

Moreover, we prove the following convergence result: if the choices
in~\refeq{altopt-ri} are fixed such that $x_{n+1}$ is a continuous
function of~$x_n$, the elements of~$\cal I$ are visited in a cyclic
order, and the sequence $(x_n)$ is bounded, then the distances
of~$x_n$ from the set of pre-interior local minima converges to zero.

\section{Global Minima Are Local Minima}

As a warm-up, we prove one expected property of local minima: every
element of $M(X,f)$ (global minimum) is a local minimum and every
element of $\ri M(X,f)$ (which could be called {\em interior global
  minimum\/}) is an interior local minimum. Noting that global minima
are local minima with respect to~$\{V\}$, we actually prove, in
Theorem~\ref{th:domin} below, a more general fact.  For sets $\cal I$
and $\cal I'$ of subspaces of~$V$, we say that $\cal I'$ {\em
  dominates\/}~$\cal I$ if for every $I\in{\cal I}$ there is
$I'\in{\cal I'}$ such that $I\subseteq I'$.

\begin{lemma}
\label{th:MA}
Let $X,Y\subseteq V$ and $f{:}\ X\to\bb R$. Let
$M(X,f) \cap Y \neq \emptyset$. Then $M(X,f) \cap Y = M(X\cap Y,f)$.
\end{lemma}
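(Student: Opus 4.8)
The plan is to prove the set equality $M(X,f)\cap Y = M(X\cap Y,f)$ by double inclusion, using the hypothesis that $M(X,f)\cap Y\neq\emptyset$ to pin down the common minimum value. Let me fix some $x^*\in M(X,f)\cap Y$; then $f(x^*)\le f(y)$ for all $y\in X$, and in particular $f(x^*)\le f(y)$ for all $y\in X\cap Y$, while $x^*\in X\cap Y$, so $x^*\in M(X\cap Y,f)$ as well. This shows the right-hand side is nonempty and, more importantly, that every element of $M(X\cap Y,f)$ attains the value $f(x^*)$ — call it $m$.

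For the inclusion $\subseteq$: take any $x\in M(X,f)\cap Y$. Then $x\in X\cap Y$ and $f(x)=m$ (since $x$ and $x^*$ both minimize $f$ over $X$), hence $f(x)\le f(y)$ for all $y\in X\cap Y\subseteq X$. Thus $x\in M(X\cap Y,f)$. For the inclusion $\supseteq$: take any $x\in M(X\cap Y,f)$. Then $x\in X\cap Y$, so certainly $x\in Y$, and $f(x)=m=f(x^*)$ by the paragraph above. But $x^*\in M(X,f)$, so $f(x^*)\le f(y)$ for all $y\in X$; combining, $f(x)=f(x^*)\le f(y)$ for all $y\in X$, so $x\in M(X,f)$. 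Together with $x\in Y$ this gives $x\in M(X,f)\cap Y$.

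This argument is entirely elementary — the only subtlety, and the reason the nonemptiness hypothesis is needed, is the step that transfers the minimum value between the two problems: without a point known to lie in both $M(X,f)$ and $Y$, one cannot conclude that a minimizer of $f$ over $X\cap Y$ is also a minimizer over all of $X$ (indeed $M(X\cap Y,f)$ is always nonempty when $X\cap Y\neq\emptyset$ and $f$ is, say, lower semicontinuous on a suitable domain, but it need not meet $M(X,f)$). I expect no real obstacle here; the proof is a few lines of chasing the defining inequality of $M(\cdot,f)$, and the main point to state carefully is that all relevant minimizers share the value $f(x^*)$.
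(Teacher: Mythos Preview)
Your proof is correct and follows essentially the same double-inclusion approach as the paper. The only cosmetic difference is that for the $\supseteq$ direction the paper argues by contradiction (if $f(x)>f(y)$ for some $y\in X\setminus Y$ then $M(X,f)\cap Y=\emptyset$), whereas you argue directly via the common minimum value $m=f(x^*)$; these are the same idea.
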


\begin{proof}
To prove $\subseteq$, we need to prove that $x\in
M(X,f)\cap Y$ implies $x\in M(X\cap Y,f)$. This is obvious because if
$f(x)\le f(y)$ holds for all $y\in X$, then it holds for all $y\in
X\cap Y$.

To prove $\supseteq$, we need to prove that $x\in M(X\cap Y,f)$ and
$M(X,f)\cap Y\neq\emptyset$ imply $x\in M(X,f)$.
For that, it suffices to show that $x\in X\cap Y$ and $M(X,f)\cap
Y\neq\emptyset$ imply that $f(x)\le f(y)$ for all $y\in X\setminus
Y$. This is true, because $f(x)>f(y)$ for some $y\in X\setminus Y$
would imply $M(X,f)\cap Y=\emptyset$.
\end{proof}

Now we will use the property of the relative interior
\cite{Rockafellar-1970,Hiriart-book-2004} that for any convex sets
$X,Y\subseteq V$,
\begin{equation}
\ri X \cap \ri Y \neq \emptyset \quad\Longrightarrow\quad \ri X \cap \ri Y = \ri(X\cap Y) .
\label{eq:ri-cap}
\end{equation}

\begin{theorem}
\label{th:domin}
Let $X\subseteq V$ be a convex set and $f{:}\ X\to\bb R$ be a convex
function. Let $\cal I$ and $\cal I'$ be finite sets of subspaces
of~$V$ such that $\cal I'$ dominates $\cal I$.
\begin{itemize}
\item Every local minimum with respect to~$\cal I'$ is a local minimum
with respect to~$\cal I$.
\item Every interior local minimum with respect to~$\cal I'$ is an
interior local minimum with respect to~$\cal I$.
\end{itemize}
\end{theorem}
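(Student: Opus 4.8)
The plan is to prove both bullets of Theorem~\ref{th:domin} by reducing each statement about~$\cal I$ to the corresponding statement about~$\cal I'$, using only the domination hypothesis together with Lemma~\ref{th:MA} and property~\refeq{ri-cap}. Fix $x\in X$ and $I\in{\cal I}$, and pick $I'\in{\cal I'}$ with $I\subseteq I'$; then $x+I\subseteq x+I'$, hence $X\cap(x+I)\subseteq X\cap(x+I')$. The key observation is that $X\cap(x+I) = \bigl(X\cap(x+I')\bigr)\cap(x+I)$, so we are in the setting of intersecting an optimal set with a further subspace-translate, which is exactly what Lemma~\ref{th:MA} and~\refeq{ri-cap} are designed to handle.

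For the first bullet, suppose $x$ is a local minimum with respect to~$\cal I'$, i.e.\ $x\in M(X\cap(x+I'),f)$. Since $x\in x+I$ and $x\in X$, we have $x\in \bigl(X\cap(x+I')\bigr)\cap(x+I)$, so the optimal set $M(X\cap(x+I'),f)$ meets $x+I$. Lemma~\ref{th:MA}, applied with the set $X\cap(x+I')$ in the role of~``$X$'' and $x+I$ in the role of~``$Y$'', then gives $M(X\cap(x+I'),f)\cap(x+I) = M\bigl((X\cap(x+I'))\cap(x+I),f\bigr) = M(X\cap(x+I),f)$. Since $x$ lies in the left-hand side, $x\in M(X\cap(x+I),f)$. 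As $I\in{\cal I}$ was arbitrary, $x$ is a local minimum with respect to~$\cal I$.

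For the second bullet, suppose $x\in\ri M(X\cap(x+I'),f)$. I want to conclude $x\in\ri M(X\cap(x+I),f)$. Write $A = M(X\cap(x+I'),f)$ (a convex set, since $f$ is convex and $X\cap(x+I')$ is convex). The affine set $x+I$ satisfies $\ri(x+I)=x+I$, and $x\in\ri A$ by hypothesis while $x\in x+I$, so $\ri A\cap\ri(x+I)\ne\emptyset$; by~\refeq{ri-cap}, $\ri A\cap(x+I)=\ri\bigl(A\cap(x+I)\bigr)$. Combining with the Lemma~\ref{th:MA} identity $A\cap(x+I)=M(X\cap(x+I),f)$ from the previous paragraph (whose hypothesis $A\cap(x+I)\ne\emptyset$ holds because $x$ belongs to it), we get $x\in\ri A\cap(x+I)=\ri\bigl(M(X\cap(x+I),f)\bigr)$, as desired.

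The only delicate point is bookkeeping: making sure the non-emptiness hypotheses of Lemma~\ref{th:MA} and of~\refeq{ri-cap} are verified before they are invoked, and that $A\cap(x+I)$ is taken relative to the correct ambient convexity so that~\refeq{ri-cap} applies. All of these reduce to the single trivial remark that $x$ itself lies in every set in question, so I expect no real obstacle; the proof is essentially two applications of the two cited facts, one for each bullet.
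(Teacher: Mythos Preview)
Your proposal is correct and follows essentially the same route as the paper: both bullets are handled by applying Lemma~\ref{th:MA} with $X\cap(x+I')$ playing the role of the ambient set and $x+I$ the restricting set, and the second bullet additionally invokes~\refeq{ri-cap} using $\ri(x+I)=x+I$. The only cosmetic slip is in the first bullet, where the sentence ``$x\in(X\cap(x+I'))\cap(x+I)$, so the optimal set $M(X\cap(x+I'),f)$ meets $x+I$'' does not quite justify its conclusion---what you need (and have) is $x\in M(X\cap(x+I'),f)\cap(x+I)$, which follows directly from the hypothesis $x\in M(X\cap(x+I'),f)$ together with $x\in x+I$.
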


\begin{proof}
We just need to consider two subspaces $I,I'\subseteq V$ such that
$I\subseteq I'$.
\begin{itemize}
\item 
Noting that $x\in x+I$, by Lemma~\ref{th:MA} we have
$x\in M(X\cap(x+I'),f) = M(X\cap(x+I'),f)\cap(x+I)=M(X\cap(x+I),f)$.
\item 
Noting that $x+I=\ri(x+I)$, by~\refeq{ri-cap} we have
$x\in\ri M(X\cap(x+I'),f)=\ri M(X\cap(x+I'),f)\cap(x+I)=\ri(M(X\cap(x+I'),f)\cap(x+I))=\ri
M(X\cap(x+I),f)$.  \qedhere
\end{itemize}
\end{proof}

\section{Linear Objective Function}
\label{sec:altopt-lin}

Using the epigraph form, the minimization of a convex function on a
closed convex set can be transformed to the minimization of a linear
function on a closed convex set. Therefore, further
in~\S\ref{sec:altopt-lin} we assume that $X$~is closed convex and
$f$~is linear.
We will return to the case of non-linear convex~$f$ later
in~\S\ref{sec:epigraph}.

For $x,y\in V$, we denote
\begin{equation}
[x,y] = \conv\{x,y\} = \{\, (1-\alpha)x+y \mid 0\le\alpha\le1 \,\} .
\label{eq:segment}
\end{equation}
For $x\neq y$ this is a line segment, for $x=y$ it is a
singleton. It holds that
\begin{equation}
\ri[x,y] = \{\, (1-\alpha)x+y \mid 0<\alpha<1 \,\} .
\label{eq:segment-ri}
\end{equation}
For $x\neq y$ we have $\ri[x,y]=[x,y]\setminus\{x,y\}$, for $x=y$ we
have $[x,y]=\ri[x,y]=\{x\}$.


We recall basic facts about faces of a convex set
\cite{Rockafellar-1970,Hiriart-book-2004}.  A {\em face\/} of a convex
set $X\subseteq V$ is a convex set $F\subseteq X$ such that every line
segment from~$X$ whose relative interior intersects~$F$ lies in~$F$,
i.e.,
\begin{equation}
x,y\in X, \;\; F\cap\ri[x,y]\neq\emptyset \quad\Longrightarrow\quad x,y\in F .
\label{eq:face}
\end{equation}
The set of all faces of a closed convex set partially ordered by
inclusion is a complete lattice, in particular it is closed under
(possibly infinite) intersections.
For a point $x\in X$, let $F(X,x)$ denote the intersection of all
faces (equivalently, the smallest face) of~$X$ that contain~$x$. For
every $x,y\in X$,
\begin{IEEEeqnarray}{rCrClCl}
\label{eq:Fx}
y&\in& F(X,x) &\quad\Longleftrightarrow\quad& F(X,y)&\subseteq& F(X,x) , \IEEEsubnumstart\label{eq:Fx<=}\\
y&\in& \ri F(X,x) &\quad\Longleftrightarrow\quad& F(X,y)&=&F(X,x) . \label{eq:Fx=}\\
y&\in& \rb F(X,x) &\quad\Longleftrightarrow\quad& F(X,y)&\subsetneq& F(X,x) , \label{eq:Fx<}
\end{IEEEeqnarray}
where $\rb X=X\setminus\ri X$ denotes the relative boundary of a
closed convex set~$X$. Equivalence~\refeq{Fx=} shows that $F(X,x)$ is
in fact the unique face of~$X$ having~$x$ in its relative interior.
Note that \refeq{Fx<} follows from \refeq{Fx<=} and~\refeq{Fx=}.

The following simple lemmas will be used several times later:

\begin{lemma}
\label{th:ri-prolong}
Let $X\subseteq V$ be a convex set. We have $x\in\ri X$ iff
for every $y\in X$ there exists $u\in X$ such that $x\in\ri[y,u]$.
\end{lemma}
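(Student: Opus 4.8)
The claim is a ``relative-interior prolongation'' characterization: $x\in\ri X$ iff for every $y\in X$ there is $u\in X$ with $x\in\ri[y,u]$. The plan is to prove both directions from the basic first-order description of the relative interior, namely that $x\in\ri X$ iff for every $y\in X$ one can go a little past $x$ along the segment $[y,x]$ while staying in $X$; equivalently, $x\in\ri X$ iff $\aff X$ equals $\aff\{x\}+\cone(X-x)$ locally, or most conveniently: $x\in\ri X$ iff for each $y\in X$ there is $\mu>1$ with $x+(\mu-1)(x-y)=\mu x-(\mu-1)y\in X$. I would take this prolongation fact about $\ri$ as known (it is the standard line-segment principle, e.g.\ Rockafellar Thm.~6.4).

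\emph{($\Leftarrow$).} Suppose for every $y\in X$ there is $u\in X$ with $x\in\ri[y,u]$. Fix $y\in X$; by hypothesis $x=(1-\alpha)y+\alpha u$ for some $0<\alpha<1$ and some $u\in X$. Solving, $u=\frac{1}{\alpha}x-\frac{1-\alpha}{\alpha}y=\mu x-(\mu-1)y$ with $\mu=1/\alpha>1$, so the point $\mu x-(\mu-1)y$ lies in $X$. Since this holds for every $y\in X$, the line-segment principle gives $x\in\ri X$. (One should also note $X\neq\emptyset$ here, which is immediate since $x\in[y,u]\subseteq X$ once any $y$ is taken — and if $X=\emptyset$ the ``for every $y\in X$'' is vacuous but then $\ri X=\emptyset$ too, so the statement still reads correctly; it is cleanest to observe the hypothesis forces $X\neq\emptyset$ only after picking a $y$, so strictly both sides are trivially true or the argument applies.)

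\emph{($\Rightarrow$).} Suppose $x\in\ri X$ and let $y\in X$ be arbitrary. By the line-segment principle there is $\mu>1$ with $u:=\mu x-(\mu-1)y\in X$. Then $x=\frac{1}{\mu}u+\frac{\mu-1}{\mu}y=(1-\alpha)y+\alpha u$ with $\alpha=1/\mu\in(0,1)$, so by \refeq{segment-ri} we have $x\in\ri[y,u]$, as required.

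\textbf{Main obstacle.} There is no deep obstacle; the whole content is translating between the segment parametrization in \refeq{segment}--\refeq{segment-ri} and the ``push past $x$'' form of the line-segment principle, which is just the substitution $\alpha\leftrightarrow 1/\mu$. The only point needing a moment's care is the degenerate case $x=y$ (then $u=x$ works and $[y,u]=\{x\}=\ri[y,u]$, so both directions are consistent) and the empty-set convention, neither of which causes real trouble.
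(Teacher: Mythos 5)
Your proof is correct and follows essentially the same route as the paper: the paper also treats the forward direction as immediate from the definition of relative interior and delegates the converse to the line-segment principle (Rockafellar, Theorem~6.4), which is exactly the prolongation fact you invoke; your contribution is merely making the reparametrization $\alpha\leftrightarrow1/\mu$ explicit. (Your aside about $X=\emptyset$ is a harmless non-issue, since the hypothesis is only ever applied with $x\in X$.)
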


\begin{proof}
The `only-if' direction is immediate from the definition of relative
interior. For the `if' direction see, e.g.,
\cite[Theorem~6.4]{Rockafellar-1970}.
\end{proof}

\begin{lemma}
\label{th:XYri}
Let $X,Y\subseteq V$ be closed convex sets such that $Y\subseteq X$. Let $x\in\ri Y$. Then
\begin{IEEEeqnarray}{rCrCrCr}
\label{eq:XYri}
y &\in& Y &\quad\Longrightarrow\quad& y &\in& F(X,x) \IEEEsubnumstart\label{eq:XYri:<=}\\
y &\in& \ri Y &\quad\Longrightarrow\quad& y &\in& \ri F(X,x) \label{eq:XYri:=}\\
y &\in& \rb Y &\quad\Longrightarrow\quad& y &\in& \rb F(X,x) \label{eq:XYri:<}
\end{IEEEeqnarray}
\end{lemma}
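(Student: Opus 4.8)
The plan is to derive all three implications from Lemma~\ref{th:ri-prolong} together with the face characterizations~\refeq{Fx<=}--\refeq{Fx<}. For~\refeq{XYri:<=} I would take $y\in Y$ and, using $x\in\ri Y$, invoke Lemma~\ref{th:ri-prolong} to obtain a point $u\in Y$ with $x\in\ri[y,u]$. Since $Y\subseteq X$ we have $y,u\in X$, and since $x\in F(X,x)$ the relative interior $\ri[y,u]$ meets the face $F(X,x)$; the face property~\refeq{face} then forces $y,u\in F(X,x)$, in particular $y\in F(X,x)$. I would record that this argument in fact proves the slightly stronger statement that for \emph{every} $x'\in\ri Y$ and \emph{every} $y'\in Y$ one has $y'\in F(X,x')$, since this symmetric form is what the next step needs.

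For~\refeq{XYri:=}, apply that stronger statement twice. Given $y\in\ri Y$, taking $x'=x$ and $y'=y$ gives $y\in F(X,x)$, hence $F(X,y)\subseteq F(X,x)$ by~\refeq{Fx<=}; taking $x'=y$ and $y'=x$ (legitimate because $x\in\ri Y\subseteq Y$) gives $x\in F(X,y)$, hence $F(X,x)\subseteq F(X,y)$. Therefore $F(X,x)=F(X,y)$, and~\refeq{Fx=} yields $y\in\ri F(X,x)$.

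For~\refeq{XYri:<} I would take $y\in\rb Y$; by~\refeq{XYri:<=} we already have $y\in F(X,x)$, so in view of~\refeq{Fx<} it suffices to rule out $y\in\ri F(X,x)$. Here my plan is to argue by contradiction: assuming $y\in\ri F(X,x)$ we get $F(X,y)=F(X,x)$ from~\refeq{Fx=}, so $y$ has ``full room'' inside the face that contains $x$, and the aim is to convert this, together with $x\in\ri Y$, into a witness of $y\in\ri Y$ --- that is, for each $z\in Y$ to produce a companion $w\in Y$ with $y\in\ri[z,w]$ and then apply Lemma~\ref{th:ri-prolong}, contradicting $y\in\rb Y$. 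I expect this last implication to be the main obstacle. It is the only one that must transport relative-interior information from the larger set $F(X,x)$ \emph{down} to the smaller set $Y$, and the delicate point is that the segment-extension step has to be carried out inside $Y$ and not merely inside $F(X,x)$; concretely it hinges on how $\aff Y$ sits inside $\aff F(X,x)$, so this is where I would expect to concentrate the work (and, if the transfer fails in full generality, to isolate the precise additional condition on $Y$ under which it holds).
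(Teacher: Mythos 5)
Your handling of \refeq{XYri:<=} and \refeq{XYri:=} is correct and is essentially the paper's own argument: \refeq{XYri:<=} is proved exactly as in the paper (Lemma~\ref{th:ri-prolong} applied to $Y$, then the face property \refeq{face} for the segment $[y,u]$ whose relative interior contains $x\in F(X,x)$), and your symmetric double application of \refeq{XYri:<=} combined with \refeq{Fx<=}--\refeq{Fx=} is precisely what the paper compresses into ``\refeq{XYri:=} follows from \refeq{XYri:<=} and \refeq{Fx}''. The genuine gap is \refeq{XYri:<}, which you leave as an unfinished contradiction plan, so the proposal as written does not establish the third implication.

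Your hesitation there is, however, well founded: for arbitrary closed convex $Y\subseteq X$ the implication \refeq{XYri:<} is in fact false. Take $V=X=\bb R^2$ and $Y=[(0,0),(1,0)]$, with $x=(1/2,0)\in\ri Y$ and $y=(0,0)\in\rb Y$; by \refeq{face} the only nonempty face of $\bb R^2$ is $\bb R^2$ itself, so $F(X,x)=\bb R^2$ and $\rb F(X,x)=\emptyset$, hence $y\notin\rb F(X,x)$. Note that the paper's one-line justification (``follows from \refeq{XYri:<=} and \refeq{Fx}'') only yields $F(X,y)\subseteq F(X,x)$, while \refeq{Fx<} needs the \emph{strict} inclusion, so the gap you ran into is really a gap in the lemma as stated, not just in your attempt. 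The implication does hold --- and this covers every place the paper uses it, namely $Y=M(X\cap(y+I),f)$ with $f$ linear in Theorems~\ref{th:iter-constant} and~\ref{th:cycle} --- under the extra hypothesis that $Y$ is a face of $X\cap A$ for some affine set $A$ (the argmin of a linear function is such a face). With that hypothesis your plan closes: assume $y\in\ri F(X,x)$; for every $z\in Y\subseteq F(X,x)$ (by \refeq{XYri:<=}), Lemma~\ref{th:ri-prolong} applied inside $F(X,x)$ gives $u\in F(X,x)\subseteq X$ with $y\in\ri[z,u]$; since $z,y\in A$ and $u$ lies on the line through them, $u\in X\cap A$, and the face property of $Y$ in $X\cap A$ (equivalently, linearity of $f$) forces $u\in Y$; Lemma~\ref{th:ri-prolong} then gives $y\in\ri Y$, contradicting $y\in\rb Y$. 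So the ``precise additional condition'' you anticipated is exactly this, and adding it turns your outline into a complete proof of the lemma in the generality actually needed.
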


\begin{proof}
To see~\refeq{XYri:<=}, let $x\in\ri Y$ and $y\in Y$. Thus, by
Lemma~\ref{th:ri-prolong}, there is $u\in Y$ such that
$x\in\ri[u,y]$. Since $x\in F(X,x)$ and $y,u\in X$, the definition of
face yields $y\in F(X,x)$. Implications \refeq{XYri:=}
and~\refeq{XYri:<} follow from~\refeq{XYri:<=} and~\refeq{Fx}.
\end{proof}

\begin{lemma}
\label{th:xyzu}
Let $y,z,u\in V$ and $x\in\ri[u,y]$. Then $\ri[u,z]\cap\ri[x,x+z-y]\neq\emptyset$.
\end{lemma}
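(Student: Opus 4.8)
The plan is to exhibit one explicit point belonging to both relative interiors. Since $x\in\ri[u,y]$, equation~\refeq{segment-ri} gives a scalar $\alpha$ with $0<\alpha<1$ and $x=(1-\alpha)u+\alpha y$ (in the degenerate situation $u=y$ this forces $x=u=y$, and then any such $\alpha$ serves). The right candidate for the common point is
\[
p \;=\; x+\alpha(z-y),
\]
i.e. we move from $x$ toward $x+z-y$ by exactly the same fraction $\alpha$ by which $x$ sits between $u$ and $y$.

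The verification is then two one-line identities. First, substituting $x=(1-\alpha)u+\alpha y$ shows $p=(1-\alpha)u+\alpha z$, so $p$ is the point of the segment $[u,z]$ with parameter $\alpha\in(0,1)$, hence $p\in\ri[u,z]$ by~\refeq{segment-ri}. Second, $(1-\alpha)x+\alpha(x+z-y)=x+\alpha(z-y)=p$, so $p$ is the point of the segment $[x,x+z-y]$ with parameter $\alpha\in(0,1)$, hence $p\in\ri[x,x+z-y]$ by~\refeq{segment-ri}. Therefore $p\in\ri[u,z]\cap\ri[x,x+z-y]$, proving the claim.

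There is no genuinely hard step: the lemma reduces to the affine identity above together with the parametrization of a segment. The only point requiring attention is the bookkeeping when one of the segments collapses to a singleton, namely the cases $u=y$, $u=z$, or $z=y$. The argument above is arranged to avoid a case split: equation~\refeq{segment-ri} already states $[x,y]=\ri[x,y]=\{x\}$ when $x=y$, so the two displayed identities $p=(1-\alpha)u+\alpha z$ and $p=(1-\alpha)x+\alpha(x+z-y)$ still certify membership of $p$ in the (possibly singleton) relative interiors. I would simply note this rather than treat the degenerate subcases separately.
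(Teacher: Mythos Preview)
Your proof is correct and essentially identical to the paper's: both pick $\alpha\in(0,1)$ with $x=(1-\alpha)u+\alpha y$ and verify that the point $(1-\alpha)u+\alpha z=(1-\alpha)x+\alpha(x+z-y)$ lies in both relative interiors. The only cosmetic difference is that the paper introduces this point directly as $v=(1-\alpha)u+\alpha z$, whereas you introduce it as $p=x+\alpha(z-y)$ and then derive the other representation; your treatment of the degenerate cases is also slightly more explicit.
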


\begin{proof}
Let $0<\alpha<1$ be such that $x=(1-\alpha)u+\alpha y$ (note that if
$y\neq u$ then $\alpha$ is unique, otherwise we can choose any
$0<\alpha<1$). Let $v=(1-\alpha)u+\alpha z$, hence
$v\in\ri[u,z]$. Subtracting the two equations yields
$v=(1-\alpha)x+\alpha(x+z-y)$, hence $v\in\ri[x,x+z-y]$.
\end{proof}

The picture illustrates Lemma~\ref{th:xyzu} for the points in a general
position (i.e., $y,z,u$ not collinear):
\begin{center}
\begin{tikzpicture}
\def\ptsize{1.5pt}
\coordinate (beg) at (0,0);
\coordinate (end) at (4,0);
\coordinate (y) at (.5,0);
\coordinate (x) at (2.5,0);
\coordinate (u) at (3.5,0);
\coordinate (z) at (1,1.5);
\coordinate (xx) at (3,1.5);
\coordinate (v) at (intersection of z--u and x--xx);

\fill (y) circle (\ptsize) node[below] {$y$};
\fill (x) circle (\ptsize) node[below] {$x$};
\fill (u) circle (\ptsize) node[below] {$u$};
\fill (z) circle (\ptsize) node[left] {$z$};
\fill (v) circle (\ptsize) node[above=2pt,right] {$v$};
\fill (xx) circle (\ptsize) node[right] {$x+z-y$};

\draw (beg) -- (end);
\draw[dashed] (y) -- (z);
\draw (u) -- (z);
\draw[dashed] (x) -- (xx);
\end{tikzpicture}
\end{center}

\subsection{Structure of the Set of Local Minima}
\label{sec:structure}

It is well-known that the set of global minima of a linear
function~$f$ on a closed convex set~$X$ is an (exposed) face
of~$X$. We show that local resp.\ interior local minima also cluster
to faces of~$X$. Moreover, similarly as the set of all faces of~$X$,
we show that the set of faces of~$X$ containing local resp.\ interior
local minima are closed under intersections.

In the theorems in the rest of this section, the letter~$I$ will always denote
a subspace of~$V$.

\begin{theorem}
\label{th:xF-locmin}
Let $x\in M(X \cap (x+I),f)$ and $y\in F(X,x)$. Then
$y\in M(X \cap (y+I),f)$.
\end{theorem}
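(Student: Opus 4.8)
The plan is to convert any point competing with~$y$ into a point competing with~$x$, exploiting that $y$ lies in the smallest face of~$X$ containing~$x$. First I would record that $x\in\ri F(X,x)$ by~\refeq{Fx=}, and then, since $y\in F(X,x)$, apply Lemma~\ref{th:ri-prolong} to the convex set $F(X,x)$ to obtain a point $u\in F(X,x)$ with $x\in\ri[u,y]$. Note also that $y\in X\cap(y+I)$ holds trivially (as $y\in F(X,x)\subseteq X$ and $0\in I$), so it only remains to show $f(y)\le f(z)$ for an arbitrary $z\in X\cap(y+I)$.

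Fixing such a~$z$ (we may assume $z\neq y$, the case $z=y$ being immediate), I would apply Lemma~\ref{th:xyzu} to the points $y,z,u$ together with $x\in\ri[u,y]$, obtaining a point $v\in\ri[u,z]\cap\ri[x,x+z-y]$. The next step is to verify that $v$ is feasible for the problem in which $x$~is optimal: on the one hand $v\in[u,z]\subseteq X$ by convexity of~$X$, since $u\in F(X,x)\subseteq X$ and $z\in X$; on the other hand $z-y\in I$ because $z\in y+I$, so $x$ and $x+(z-y)$ both lie in the affine set $x+I$, giving $v\in[x,x+z-y]\subseteq x+I$. Hence $v\in X\cap(x+I)$, and $x\in M(X\cap(x+I),f)$ yields $f(x)\le f(v)$.

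To conclude, write $v=(1-\beta)x+\beta(x+z-y)$ with $0<\beta<1$; linearity of~$f$ then gives $f(v)=f(x)+\beta(f(z)-f(y))$, so $f(x)\le f(v)$ forces $0\le\beta(f(z)-f(y))$, i.e.\ $f(y)\le f(z)$, as required. I expect the one genuinely load-bearing step to be the transport supplied by Lemma~\ref{th:xyzu}: it is precisely what simultaneously places the translate $v=x+\beta(z-y)$ on a segment inside~$X$ (the segment $[u,z]$, whose endpoint $u$ is available because $u\in F(X,x)$) and on a segment inside $x+I$, thereby turning the assumed optimality of~$x$ along~$I$ into the desired inequality at~$y$. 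The remaining parts — the trivial case $z=y$ and the elementary bookkeeping with linearity — are routine.
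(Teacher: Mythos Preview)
Your proof is correct and follows essentially the same route as the paper: obtain $u$ with $x\in\ri[u,y]$ via Lemma~\ref{th:ri-prolong} (applied to $F(X,x)$, using $x\in\ri F(X,x)$), invoke Lemma~\ref{th:xyzu} to produce $v\in\ri[u,z]\cap\ri[x,x+z-y]$, check $v\in X\cap(x+I)$, and finish by linearity of~$f$. The only cosmetic differences are that the paper does not split off the trivial case $z=y$ and phrases the final linearity step via the translation $[x,x+z-y]=[y,z]+(x-y)$ rather than via the explicit convex-combination coefficient~$\beta$.
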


\begin{proof}
Let $z\in X \cap (y+I)$. We need to prove that $f(y)\le f(z)$.
Since $y\in F(X,x)$, by Lemma~\ref{th:ri-prolong} there is $u\in X$
such that $x\in\ri[u,y]$. By Lemma~\ref{th:xyzu}, there is a point
\[
v\in\ri[u,z]\cap\ri[x,x+z-y].
\]
Since $z,u\in X$, from convexity of~$X$ we have $v\in X$. Since
$z-y\in I$, we have $v\in x+I$.  Since $x\in M(X \cap (x+I),f)$, we
thus have $f(x)\le f(v)$, hence $f(x)\le f(x+z-y)$. Since
$[x,x+z-y]=[y,z]+x-y$, by linearity of~$f$ we have $f(y)\le f(z)$.
\end{proof}

\begin{corollary}
\label{th:xF-locmin'}
If $x$ is a local minimum, then every point of $F(X,x)$ is a local
minimum.
\end{corollary}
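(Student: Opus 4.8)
The plan is to deduce Corollary~\ref{th:xF-locmin'} directly from Theorem~\ref{th:xF-locmin} by unwinding the definition of a local minimum with respect to~$\cal I$, which is condition~\refeq{Ilocopt}. Let $x$ be a local minimum and let $y\in F(X,x)$; I need to show $y$ satisfies~\refeq{Ilocopt}, i.e.\ $y\in M(X\cap(y+I),f)$ for every $I\in{\cal I}$.

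First I would fix an arbitrary $I\in{\cal I}$. Since $x$ is a local minimum, by~\refeq{Ilocopt} we have $x\in M(X\cap(x+I),f)$. This is precisely the hypothesis of Theorem~\ref{th:xF-locmin}, and the other hypothesis, $y\in F(X,x)$, is exactly our assumption on~$y$. Applying the theorem yields $y\in M(X\cap(y+I),f)$. Since $I\in{\cal I}$ was arbitrary, $y$ satisfies~\refeq{Ilocopt} for all $I\in{\cal I}$, so $y$ is a local minimum. As $y$ was an arbitrary point of $F(X,x)$, every point of $F(X,x)$ is a local minimum.

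I do not expect any real obstacle here: the corollary is essentially just Theorem~\ref{th:xF-locmin} quantified over all search directions $I\in{\cal I}$ and over all target points $y\in F(X,x)$. The only thing to be careful about is the logical bookkeeping—making sure the quantifier over~$I$ is handled for each individual $y$, and that one does not need any compatibility between different choices of~$I$ (one does not, since each membership $y\in M(X\cap(y+I),f)$ is established independently). So the proof is a one-line appeal to Theorem~\ref{th:xF-locmin} together with the definitions.
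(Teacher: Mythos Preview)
Your proof is correct and matches the paper's approach exactly: the paper states this as an immediate corollary of Theorem~\ref{th:xF-locmin} with no further argument, and your quantification over all $I\in{\cal I}$ and all $y\in F(X,x)$ is precisely the unwinding that makes this immediate.
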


But notice that if $x$ and~$y$ are local minima such that $y\in F(X,x)$,
then we can have $f(y)\neq f(x)$.

\begin{lemma}
\label{th:xy-MF}
Let $x\in\ri M(X\cap(x+I),f)$ and $y\in F(X,x)$. Then $M(X\cap(y+I),f) \subseteq F(X,x)$.
\end{lemma}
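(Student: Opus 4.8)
The plan is to fix an arbitrary point $z\in M(X\cap(y+I),f)$ and show $z\in F(X,x)$; since $z$ is arbitrary this yields the inclusion. First I would record two preliminaries. Since $x\in\ri M(X\cap(x+I),f)\subseteq M(X\cap(x+I),f)$ and $y\in F(X,x)$, Theorem~\ref{th:xF-locmin} gives $y\in M(X\cap(y+I),f)$; as $y\in X\cap(y+I)$ and $z$ is a minimizer, this forces $f(y)=f(z)$. And, exactly as in the proof of Theorem~\ref{th:xF-locmin}, from $y\in F(X,x)$ and Lemma~\ref{th:ri-prolong} there is a point $u\in X$ with $x\in\ri[u,y]$.

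Next I would construct an auxiliary point linking the two optimal sets. Applying Lemma~\ref{th:xyzu} to $y,z,u$ and $x\in\ri[u,y]$ produces $v\in\ri[u,z]\cap\ri[x,x+z-y]$. Three facts about~$v$ are needed: $v\in X$ (it is a convex combination of $u,z\in X$); $v\in x+I$ (it lies on the segment $[x,x+z-y]$ and $z-y\in I$); and $f(v)=f(x)$ (writing $v=x+\alpha(z-y)$ with $0<\alpha<1$, linearity of~$f$ together with $f(y)=f(z)$ gives $f(v)=f(x)+\alpha(f(z)-f(y))=f(x)$). Hence $v\in X\cap(x+I)$ attains the minimum value~$f(x)$, i.e.\ $v\in M(X\cap(x+I),f)$.

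Finally I would invoke the face lemmas. The set $Y:=M(X\cap(x+I),f)$ is a closed convex subset of~$X$ — it is the intersection of the closed convex set $X\cap(x+I)$ with a level set of the continuous linear function~$f$ — and $x\in\ri Y$ by hypothesis, so Lemma~\ref{th:XYri}, implication~\refeq{XYri:<=}, applied to $v\in Y$ gives $v\in F(X,x)$. Since $v\in\ri[u,z]$ with $u,z\in X$, the face $F(X,x)$ meets $\ri[u,z]$, so the defining property~\refeq{face} of a face yields $z\in F(X,x)$, as desired.

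The main obstacle, and the reason for the detour through~$v$, is that neither $y$ nor $z$ need lie in the ``$x$-optimal'' set $M(X\cap(x+I),f)$ — only in the ``$y$-optimal'' one — so Lemma~\ref{th:XYri} cannot be applied to them directly. The point~$v$ is engineered to sit simultaneously on a segment $[u,z]$ with endpoints in~$X$ (so that it witnesses the face property at~$z$) and on the segment through~$x$ in direction~$I$, where the objective is constant at its minimum because $f(z)=f(y)$; the latter property is precisely what places~$v$ into $M(X\cap(x+I),f)$, a set whose relative interior contains~$x$. A minor point to check carefully is the closedness of $M(X\cap(x+I),f)$, which is needed for Lemma~\ref{th:XYri} to apply.
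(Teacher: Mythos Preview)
Your proof is correct and follows essentially the same route as the paper's: you use Theorem~\ref{th:xF-locmin} to get $f(y)=f(z)$, Lemma~\ref{th:ri-prolong} to produce~$u$, Lemma~\ref{th:xyzu} to produce the auxiliary point~$v$, then Lemma~\ref{th:XYri} and the face definition to conclude $z\in F(X,x)$. Your extra commentary on the role of~$v$ and the closedness of $M(X\cap(x+I),f)$ is helpful exposition, but the argument itself matches the paper's line for line.
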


\begin{proof}
Let $z\in M(X\cap(y+I),f)$.  By Theorem~\ref{th:xF-locmin} we have
$y\in M(X\cap(y+I),f)$, hence $f(z)=f(y)$.  Since $y\in F(X,x)$, by
Lemma~\ref{th:ri-prolong} there is $u\in X$ such that $x\in\ri[u,y]$.
By Lemma~\ref{th:xyzu}, there is
\[
v\in\ri[u,z]\cap\ri[x,x+z-y] .
\]
Since $z,u\in X$ and $z-y\in I$, we have $v\in X\cap(x+I)$. Since
$[x,x+z-y]=[y,z]+x-y$, by linearity of~$f$ we have $f(v)=f(x)$, hence
$v\in M(X\cap(x+I),f)$.  Lemma~\ref{th:XYri} yields $v\in
F(X,x)$. Since $z,u\in X$, the definition of face yields
$z\in F(X,x)$.
\end{proof}

\begin{lemma}
\label{th:MF-xy}
Let $x\in M(X\cap(x+I),f) \subseteq F(X,x)$. Then $x\in\ri M(X\cap(x+I),f)$.
\end{lemma}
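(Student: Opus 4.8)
Write $M=M(X\cap(x+I),f)$, a convex set containing $x$, and note the hypothesis says $M\subseteq F(X,x)$; also $M\subseteq x+I$ by definition. Since $F(X,x)$ is the smallest face of~$X$ containing~$x$, equivalence~\refeq{Fx=} gives $x\in\ri F(X,x)$. The plan is to prove $x\in\ri M$ using the characterization in Lemma~\ref{th:ri-prolong}: it suffices to show that for every $y\in M$ there is $u\in M$ with $x\in\ri[y,u]$.

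So fix $y\in M$. Then $y\in F(X,x)$, and since $x\in\ri F(X,x)$, applying Lemma~\ref{th:ri-prolong} to the convex set $F(X,x)$ yields a point $u\in F(X,x)$ with $x\in\ri[y,u]$. It remains to check $u\in M$. First, $u\in F(X,x)\subseteq X$. Second, writing $x=(1-\alpha)y+\alpha u$ with $0<\alpha<1$ (by~\refeq{segment-ri}), we get $u-x=\tfrac{1-\alpha}{\alpha}(x-y)$; since $y\in M\subseteq x+I$ we have $x-y\in I$, hence $u-x\in I$ and $u\in x+I$. Thus $u\in X\cap(x+I)$. Third, by linearity of~$f$, $f(x)=(1-\alpha)f(y)+\alpha f(u)$; since $x,y\in M$ we have $f(x)=f(y)\le f(u)$, and a strict convex combination forces $f(u)=f(x)$. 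Therefore $u\in M(X\cap(x+I),f)=M$.

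Having produced, for each $y\in M$, such a $u\in M$, Lemma~\ref{th:ri-prolong} gives $x\in\ri M$, as desired.

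The only slightly delicate point is recognizing that Lemma~\ref{th:ri-prolong} should be invoked twice and in opposite directions — once in its ``only-if'' form inside $F(X,x)$ to manufacture the prolongation point $u$, and once in its ``if'' form inside $M$ to conclude — together with the routine verification that the prolonged point $u$ remains in $x+I$ and keeps the optimal objective value, which is exactly where the hypothesis $M\subseteq F(X,x)$ and the linearity of~$f$ are used.
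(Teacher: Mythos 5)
Your proposal is correct and is essentially the paper's own argument: take an arbitrary point of $M(X\cap(x+I),f)$, prolong the segment through $x$ inside $F(X,x)$ using $x\in\ri F(X,x)$ and Lemma~\ref{th:ri-prolong}, verify the prolonged point stays in $x+I$ and has the same (optimal) value of $f$, and conclude with the ``if'' direction of Lemma~\ref{th:ri-prolong}. The only difference is that you spell out a few steps (e.g.\ $x\in\ri F(X,x)$ via~\refeq{Fx=} and the explicit computation $u-x=\tfrac{1-\alpha}{\alpha}(x-y)$) that the paper leaves implicit.
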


\begin{proof}
Let $u\in M(X\cap(x+I),f)$. Therefore $f(u)=f(x)$. Moreover, by
Lemma~\ref{th:ri-prolong} there is $v\in F(X,x)$ such that
$x\in\ri[u,v]$. Since $u\in x+I$, we have $v\in x+I$. By linearity
of~$f$ we have $f(v)=f(x)$, therefore $v\in M(X\cap(x+I),f)$. By
Lemma~\ref{th:ri-prolong}, $x\in\ri M(X\cap(x+I),f)$.
\end{proof}

\begin{theorem}
\label{th:meetF-ri}
Let $Y\subseteq X$. Let $x\in\ri M(X\cap(x+I),f)$ for all $x\in
Y$. Let $y\in\ri\bigcap_{x\in Y}F(X,x)$. Then
$y\in\ri M(X\cap(y+I),f)$.
\end{theorem}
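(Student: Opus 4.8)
The goal is to show that if every point of $Y$ is a $\ri$-type local minimum along $I$, then so is any point $y$ in the relative interior of the common face $\bigcap_{x\in Y}F(X,x)$. The natural strategy is to reduce to the case $|Y|=1$ by the following observation: once we know the single-point version, we can apply it at a cleverly chosen point of $Y$. Concretely, set $F = \bigcap_{x\in Y}F(X,x)$ and let $y\in\ri F$. I would first try to show that there is a point $x^\ast\in Y$ with $F(X,x^\ast)\supseteq F$ and, ideally, with $y\in F(X,x^\ast)$ — which is automatic since $F\subseteq F(X,x^\ast)$ for each $x\in Y$. So $y\in F(X,x^\ast)$ for every $x^\ast\in Y$. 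Now apply Lemma~\ref{th:xy-MF} with $x=x^\ast$ (using $x^\ast\in\ri M(X\cap(x^\ast+I),f)$, which holds by hypothesis): this gives $M(X\cap(y+I),f)\subseteq F(X,x^\ast)$. Intersecting over all $x^\ast\in Y$ yields $M(X\cap(y+I),f)\subseteq\bigcap_{x^\ast\in Y}F(X,x^\ast)=F$.

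**Finishing with Lemma~\ref{th:MF-xy}.**
Once we have $M(X\cap(y+I),f)\subseteq F$, I want to invoke Lemma~\ref{th:MF-xy}, which says that if $y\in M(X\cap(y+I),f)\subseteq F(X,y)$ then $y\in\ri M(X\cap(y+I),f)$. Two things must be checked to apply it. First, $y\in M(X\cap(y+I),f)$: since $y\in\ri F\subseteq F\subseteq F(X,x^\ast)$ and $x^\ast\in M(X\cap(x^\ast+I),f)$, Theorem~\ref{th:xF-locmin} gives $y\in M(X\cap(y+I),f)$. Second, I need $M(X\cap(y+I),f)\subseteq F(X,y)$; we have it inside $F$, so it suffices to show $F\subseteq F(X,y)$, equivalently $F=F(X,y)$. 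Since $y\in\ri F$ and $F$ is a face of $X$ (the set of faces is closed under intersection, as recalled in the excerpt), equivalence~\refeq{Fx=} gives exactly $F(X,y)=F$. Hence $M(X\cap(y+I),f)\subseteq F=F(X,y)$, and Lemma~\ref{th:MF-xy} delivers $y\in\ri M(X\cap(y+I),f)$, as desired.

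**Where the difficulty lies.**
The routine parts are the two applications of Theorem~\ref{th:xF-locmin} and Lemmas~\ref{th:xy-MF} and~\ref{th:MF-xy}; these plug together cleanly. The one genuinely load-bearing fact is that $F=\bigcap_{x\in Y}F(X,x)$ is itself a face of $X$, so that $y\in\ri F$ forces $F(X,y)=F$ via~\refeq{Fx=}. This is precisely the closure of the face lattice under intersection, which the excerpt states, so there is no real obstacle — but it is the step that makes the whole argument go through, and it also handles the case $Y=\emptyset$ correctly (then $F=X$, $y\in\ri X$, $F(X,y)=X$, and the claim is just that an interior global-type minimum along $I$ is $\ri$-optimal, which again follows from Lemma~\ref{th:MF-xy}). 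A minor point to watch: one should make sure $Y\neq\emptyset$ is handled or explicitly excluded, and that the hypothesis "$x\in\ri M(X\cap(x+I),f)$ for all $x\in Y$" is used only through the weaker consequence needed by Lemma~\ref{th:xy-MF}. I do not expect any step to require a new idea beyond assembling these lemmas in the right order.
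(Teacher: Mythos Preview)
Your proposal is correct and follows essentially the same route as the paper: identify $F=\bigcap_{x\in Y}F(X,x)$ as a face so that $F(X,y)=F$ via~\refeq{Fx=}, use Theorem~\ref{th:xF-locmin} to get $y\in M(X\cap(y+I),f)$, use Lemma~\ref{th:xy-MF} (applied to each $x\in Y$ and intersected) to get $M(X\cap(y+I),f)\subseteq F=F(X,y)$, and finish with Lemma~\ref{th:MF-xy}. One small correction: your parenthetical about $Y=\emptyset$ is off---in that case the hypothesis is vacuous and the conclusion is generally false, so $Y\neq\emptyset$ must be assumed (the paper's proof tacitly does the same, since both Theorem~\ref{th:xF-locmin} and Lemma~\ref{th:xy-MF} need at least one $x\in Y$).
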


\begin{proof}
Since $G=\bigcap_{x\in Y} F(X,x)$ is a face of~$X$, we have
$y\in\ri G$ iff $G=F(X,y)$. By Theorem~\ref{th:xF-locmin},
$y\in M(X\cap(y+I),f)$. By Lemma~\ref{th:xy-MF},
$M(X\cap(y+I),f)\subseteq G$. By Lemma~\ref{th:MF-xy},
$y\in\ri M(X\cap(y+I),f)$.
\end{proof}

\begin{corollary}
\label{th:meetF-ri'}
Let $Y\subseteq X$. If every point from~$Y$ is an interior local minimum, then
every relative interior point of the face $\bigcap_{x\in Y} F(X,x)$ is an interior local minimum.
\end{corollary}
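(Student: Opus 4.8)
The plan is to reduce the corollary to Theorem~\ref{th:meetF-ri} by treating the search directions in~$\cal I$ one at a time. The first observation is that $G=\bigcap_{x\in Y}F(X,x)$ is a face of~$X$: the faces of a closed convex set form a complete lattice closed under arbitrary intersections, so $G$ is a face and, in particular, $\ri G$ is meaningful. Fix an arbitrary relative interior point $y\in\ri G$.

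Next, fix an arbitrary subspace $I\in\cal I$. By hypothesis every $x\in Y$ is an interior local minimum, so by the defining condition~\refeq{Ilocopt-ri} we have $x\in\ri M(X\cap(x+I),f)$ for every $x\in Y$ --- in particular for the $I$ just fixed. This is precisely the hypothesis of Theorem~\ref{th:meetF-ri}, which therefore yields $y\in\ri M(X\cap(y+I),f)$. Since $I$ was an arbitrary element of~$\cal I$, the point~$y$ satisfies $y\in\ri M(X\cap(y+I),f)$ for every $I\in\cal I$, i.e.\ condition~\refeq{Ilocopt-ri}, and hence $y$ is an interior local minimum, as claimed.

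I do not anticipate any real obstacle: the subspaces in~$\cal I$ decouple completely, each being handled by its own application of Theorem~\ref{th:meetF-ri}, and the intersection of faces appearing in that theorem is the \emph{same} set~$G$ for every choice of~$I$. The only point that deserves to be stated explicitly is that $G$ is genuinely a face of~$X$, so that $\ri G$ is well-defined and Theorem~\ref{th:meetF-ri} applies without modification; everything else is merely unwinding the definition of interior local minimum given in~\S\ref{sec:descent}.
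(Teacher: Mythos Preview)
Your proposal is correct and is exactly the argument the paper intends: the corollary is stated without proof precisely because it follows by applying Theorem~\ref{th:meetF-ri} once for each $I\in\cal I$, using that an interior local minimum satisfies $x\in\ri M(X\cap(x+I),f)$ for every such~$I$. Your observation that $G$ is a face of~$X$ is harmless but not actually needed to invoke Theorem~\ref{th:meetF-ri} (that fact is used inside its proof, not in its hypothesis), and $\ri G$ is well-defined for any convex~$G$ regardless.
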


\begin{corollary}
\label{th:xF-locmin-ri'}
If $x$~is an interior local minimum, then every point of
$\ri F(X,x)$ is an interior local minimum.
\end{corollary}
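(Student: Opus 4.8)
The plan is to obtain this as the special case $Y=\{x\}$ of Corollary~\ref{th:meetF-ri'}. Since $x$ is assumed to be an interior local minimum, the singleton $Y=\{x\}$ consists entirely of interior local minima, so the hypothesis of Corollary~\ref{th:meetF-ri'} is met. For this $Y$ the face $\bigcap_{x'\in Y}F(X,x')$ is simply $F(X,x)$, and the conclusion of Corollary~\ref{th:meetF-ri'} then reads: every relative interior point of $F(X,x)$ is an interior local minimum. That is precisely the assertion to be proved, so no further work is needed.

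If one prefers to avoid invoking Corollary~\ref{th:meetF-ri'} and argue from the theorem level, the same reduction works directly from Theorem~\ref{th:meetF-ri}: fix an arbitrary $I\in{\cal I}$, take $Y=\{x\}$, note that $x\in\ri M(X\cap(x+I),f)$ because $x$ is an interior local minimum, and observe $\bigcap_{x'\in Y}F(X,x')=F(X,x)$; then for any $y\in\ri F(X,x)$ the theorem yields $y\in\ri M(X\cap(y+I),f)$. Since $I\in{\cal I}$ was arbitrary, $y$ satisfies~\refeq{Ilocopt-ri}, i.e.\ $y$ is an interior local minimum.

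There is essentially no obstacle here — the statement is a bookkeeping specialization of a result already established, and the only thing to get right is the trivial identity $\bigcap_{x'\in\{x\}}F(X,x')=F(X,x)$ together with the observation that a one-point set of interior local minima trivially satisfies the quantified hypothesis of the cited results.

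\begin{proof}
Apply Corollary~\ref{th:meetF-ri'} with $Y=\{x\}$. Since $x$ is an interior local minimum, every point of~$Y$ is an interior local minimum, and $\bigcap_{x'\in Y}F(X,x')=F(X,x)$. Hence every point of $\ri F(X,x)$ is an interior local minimum.
\end{proof}
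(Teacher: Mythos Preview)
Your proof is correct and matches the paper's own argument exactly: the paper's proof is the single line ``This is Corollary~\ref{th:meetF-ri'} for $Y=\{x\}$.'' Your additional remark deriving the statement directly from Theorem~\ref{th:meetF-ri} is also correct but unnecessary.
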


\begin{proof}
This is Corollary~\ref{th:meetF-ri'} for $Y=\{x\}$.
\end{proof}

The results from this section lead to the following definitions and
facts:
\begin{itemize}
\item We call a face of~$X$ a {\em local minima face\/} if all its
points are local minima. Since the set of faces of~$X$ is closed under
intersection, it follows from Corollary~\ref{th:xF-locmin'} that the set
of all local minima faces of~$X$ (assuming fixed~$f$ and~$\cal I$) is
closed under intersections. Thus, it is a complete meet-semilattice
(but not a lattice, because it need not have the greatest element).
\item We call a face of~$X$ an {\em interior local minima face\/} if
all its relative interior points are interior local
minima. Corollary~\ref{th:meetF-ri'} shows that the set of all interior
local minima faces of~$X$ (assuming fixed~$f$ and~$\cal I$) is closed
under intersections. Thus, it again is a complete meet-semilattice.
\end{itemize}

We finally define one more type of local minimum: a point~$x$ is a
{\em pre-interior local minimum\/} if $x\in F(X,y)$ for some interior
local minimum~$y$. Motivation for introducing this concept will become
clear later.

\subsection{The Effect of Iterations}
\label{sec:iterations}

Here we prove properties of sequences $(x_n)$ satisfying
conditions~\refeq{altopt} resp.~\refeq{altopt-ri} under various
assumptions.

\begin{theorem}
\label{th:n-iters-ri}
Let $(x_n)$ be a sequence satisfying~\refeq{altopt-ri} such that
$x_1$~is an interior local minimum. Then for all~$n$ we have
$f(x_{n+1})=f(x_n)$, $x_{n+1}\in\ri F(X,x_n)$, and $x_n$~is an interior
local minimum.
\end{theorem}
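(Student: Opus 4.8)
\emph{Plan.} I would prove all three assertions simultaneously by induction on~$n$. The statement to maintain is: $x_n$ is an interior local minimum. The base case $n=1$ is the hypothesis of the theorem. Assume now that $x_n$ is an interior local minimum; I must show $f(x_{n+1})=f(x_n)$, $x_{n+1}\in\ri F(X,x_n)$, and that $x_{n+1}$ is again an interior local minimum.

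The key observation is that, since $x_n$ is an interior local minimum, condition~\refeq{Ilocopt-ri} applied with $I=I_n$ gives $x_n\in\ri M(X\cap(x_n+I_n),f)$. On the other hand, condition~\refeq{altopt-ri} gives $x_{n+1}\in\ri M(X\cap(x_n+I_n),f)$. So both $x_n$ and $x_{n+1}$ lie in the relative interior of the single set
\[
Y:=M(X\cap(x_n+I_n),f).
\]
Because $f$ is linear and $X\cap(x_n+I_n)$ is closed convex, and because $Y\neq\emptyset$, the set $Y$ equals the intersection of $X\cap(x_n+I_n)$ with the hyperplane $\{x\mid f(x)=c\}$, where $c$ is the minimal value; hence $Y$ is closed and convex, and clearly $Y\subseteq X$. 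Therefore Lemma~\ref{th:XYri}, implication~\refeq{XYri:=}, applies with this $Y$ and with $x:=x_n\in\ri Y$: taking $y:=x_{n+1}\in\ri Y$, we conclude $x_{n+1}\in\ri F(X,x_n)$.

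The remaining two claims are then immediate. Since $x_n,x_{n+1}\in Y=M(X\cap(x_n+I_n),f)$, both attain the minimal value of~$f$ over that set, so $f(x_{n+1})=f(x_n)$. Finally, since $x_n$ is an interior local minimum and $x_{n+1}\in\ri F(X,x_n)$, Corollary~\ref{th:xF-locmin-ri'} yields that $x_{n+1}$ is an interior local minimum, which closes the induction.

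\emph{Main obstacle.} The only nontrivial point is verifying that Lemma~\ref{th:XYri} is applicable, i.e.\ that $Y=M(X\cap(x_n+I_n),f)$ is closed convex with $x_n\in\ri Y$. Convexity and closedness follow from linearity of~$f$ (this is exactly where the reduction to a linear objective in~\S\ref{sec:altopt-lin} pays off), and the membership $x_n\in\ri Y$ is precisely the content of ``$x_n$ is an interior local minimum'' for the direction $I_n$. Once this is in place, everything else is a direct appeal to Lemma~\ref{th:XYri} and Corollary~\ref{th:xF-locmin-ri'}.
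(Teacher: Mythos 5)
Your proof is correct and follows essentially the same route as the paper: induct on $n$ with the invariant that $x_n$ is an interior local minimum, note that both $x_n$ and $x_{n+1}$ lie in $\ri M(X\cap(x_n+I_n),f)$, apply Lemma~\ref{th:XYri} to get $x_{n+1}\in\ri F(X,x_n)$, and then invoke Corollary~\ref{th:xF-locmin-ri'} (the paper cites Corollary~\ref{th:meetF-ri'}, of which yours is the special case $Y=\{x_n\}$) to close the induction. Your explicit verification that $Y=M(X\cap(x_n+I_n),f)$ is closed convex is a harmless extra check that the paper leaves implicit.
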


\begin{proof}
Suppose that for some~$n$, $x_n$~is an interior local
minimum. Considering~\refeq{altopt-ri}, by Lemma~\ref{th:XYri} we thus
have $x_{n+1}\in\ri F(X,x_n)$. By Corollary~\ref{th:meetF-ri'},
$x_{n+1}$~is an interior local minimum. Since
$x_n,x_{n+1}\in\ri M(X\cap(x_n+I_n),f)$, we have $f(x_{n+1})=f(x_n)$.
\end{proof}

\begin{theorem}
\label{th:iter-constant}
Let $(x_n)$ be a sequence satisfying~\refeq{altopt-ri} and
$f(x_{n+1})=f(x_n)$ for all~$n$. Then for all~$n$ we have
$x_n\in F(X,x_{n+1})$, there exists~$n$ such that $x_n$~is an interior
local minimum, and $x_1$~is a pre-interior local minimum.
\end{theorem}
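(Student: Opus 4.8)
The plan is to establish the three conclusions in order, the engine being the increasing chain of smallest faces $F(X,x_n)$ generated by the sequence. For the first conclusion, note that $x_n\in X\cap(x_n+I_n)$ (since $0\in I_n$) and, because $x_{n+1}$ minimizes $f$ over $X\cap(x_n+I_n)$ and $f(x_n)=f(x_{n+1})$, we get $f(x_n)=f(x_{n+1})\le f(y)$ for all $y\in X\cap(x_n+I_n)$, i.e.\ $x_n\in M(X\cap(x_n+I_n),f)$. This is a closed convex set (intersection of the closed convex set $X\cap(x_n+I_n)$ with a level set of the linear $f$) whose relative interior contains $x_{n+1}$ by~\refeq{altopt-ri}. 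Applying Lemma~\ref{th:XYri} with $Y=M(X\cap(x_n+I_n),f)$ and relative-interior point $x_{n+1}$ gives $x_n\in F(X,x_{n+1})$, which by~\refeq{Fx<=} is the same as $F(X,x_n)\subseteq F(X,x_{n+1})$.

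Next I would show that this chain of faces stabilizes. The integers $\dim F(X,x_n)$ are non-decreasing and bounded by $\dim V$, hence constant for $n\ge N$ for some~$N$. For $n\ge N$, the inclusion $F(X,x_N)\subseteq F(X,x_n)$ together with equality of dimensions forces $\aff F(X,x_N)=\aff F(X,x_n)$, so any point of $\ri F(X,x_N)$ is, being relatively open in this common affine hull and contained in $F(X,x_n)$, also a point of $\ri F(X,x_n)$; by~\refeq{Fx=} the two faces then coincide. Thus $F(X,x_n)=F(X,x_N)=:G$ for all $n\ge N$.

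I would then show every $x_n$ with $n\ge N$ is an interior local minimum. Fix $I\in{\cal I}$. Since $(I_m)$ equals~$I$ infinitely often, the set $Y_I=\{x_{m+1}\mid m\ge N,\ I_m=I\}$ is non-empty and contained in~$X$; for each $x_{m+1}\in Y_I$ we have $x_{m+1}-x_m\in I_m=I$, so $x_{m+1}+I=x_m+I$ and hence $x_{m+1}\in\ri M(X\cap(x_{m+1}+I),f)$ by~\refeq{altopt-ri}. Moreover $\bigcap_{x\in Y_I}F(X,x)=G$ because every index involved exceeds~$N$. Theorem~\ref{th:meetF-ri} applied to $Y_I$ and~$I$ then yields $y\in\ri M(X\cap(y+I),f)$ for every $y\in\ri G$; taking $y=x_n$ with $n\ge N$, for which $x_n\in\ri F(X,x_n)=\ri G$, we get $x_n\in\ri M(X\cap(x_n+I),f)$. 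As $I\in{\cal I}$ was arbitrary, $x_n$ is an interior local minimum, settling the second conclusion. Finally, chaining the first step gives $x_1\in F(X,x_1)\subseteq F(X,x_{N+1})$, and since $x_{N+1}$ is an interior local minimum, this is exactly the definition of $x_1$ being a pre-interior local minimum.

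The step I expect to need the most care is the passage in the second paragraph from stabilization of the \emph{dimensions} of the faces $F(X,x_n)$ to stabilization of the faces themselves, and its interplay with the third paragraph: one must exploit the recurrence of every search direction in $(I_n)$ together with the meet-semilattice structure of interior local minima faces (Theorem~\ref{th:meetF-ri}) so that a single tail point is certified optimal simultaneously along \emph{all} subspaces in~${\cal I}$. The remaining manipulations are routine applications of the lemmas already proved.
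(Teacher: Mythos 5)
Your proof is correct, and while it rests on the same two pillars as the paper's argument (Lemma~\ref{th:XYri} and Theorem~\ref{th:meetF-ri}), it organizes the finiteness step genuinely differently. The paper splits each iteration into the cases $x_n\in\ri M(X\cap(x_n+I_n),f)$ and $x_n\in\rb M(X\cap(x_n+I_n),f)$; in the latter case $\dim F(X,x_{n+1})>\dim F(X,x_n)$, and it argues (using the recurrence of every direction in $(I_n)$ and the forward propagation of ``good'' directions through Theorem~\ref{th:meetF-ri}) that if some $x_n$ is not an interior local minimum then the boundary case must occur within finitely many steps, so that if no $x_n$ were an interior local minimum the face dimension would eventually exceed $\dim X$ --- a contradiction. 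You skip the case split entirely: Lemma~\ref{th:XYri} alone gives $x_n\in F(X,x_{n+1})$, hence $F(X,x_n)\subseteq F(X,x_{n+1})$ by~\refeq{Fx<=}; you then let the monotone chain of faces stabilize at a face $G$ by the dimension bound, and for each $I\in{\cal I}$ you take a post-stabilization step with $I_m=I$, where \refeq{altopt-ri} together with $x_{m+1}-x_m\in I$ makes $x_{m+1}$ optimal-in-relative-interior along $I$, and Theorem~\ref{th:meetF-ri} transfers this to every point of $\ri G$, in particular to every tail point $x_n$, $n\ge N$. All the individual applications check out ($M(X\cap(x_n+I_n),f)$ is closed convex and contains $x_n$ once $f(x_{n+1})=f(x_n)$; equal dimensions of nested faces force equal affine hulls and hence equal faces via~\refeq{Fx=}; the standing assumption that $(I_n)$ contains each direction infinitely often makes $Y_I$ non-empty). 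What your route buys: you avoid the paper's terse and only implicitly justified claim that a ``bad'' direction remains bad until it is next processed, and you get the slightly stronger conclusion that \emph{all} $x_n$ with $n\ge N$ are interior local minima (the paper recovers this only afterwards via Theorem~\ref{th:n-iters-ri}). What the paper's route buys: the boundary case can occur at most $\dim X$ times, a quantitative fact that its proof of Theorem~\ref{th:cycle} reuses and which your stabilization argument does not directly provide.
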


\begin{proof}
Combining $f(x_{n+1})=f(x_n)$ with~\refeq{altopt-ri} yields
$x_n\in M(X\cap(x_n+I_n),f)$. Thus, for every~$n$ there are two
possibilities:
\begin{itemize}
\item If $x_n\in\ri M(X\cap(x_n+I_n),f)$ then, by Lemma~\ref{th:XYri},
we have $x_n\in\ri F(X,x_{n+1})$. By Theorem~\ref{th:meetF-ri}, we
have $x_{n+1}\in\ri M(X\cap(x_{n+1}+I),f)$ for all $I\in\cal I$ such
that $x_n\in\ri M(X\cap(x_n+I),f)$.
\item If $x_n\in\rb M(X\cap(x_n+I_n),f)$ then, by Lemma~\ref{th:XYri}, we have
$x_n\in\rb F(X,x_{n+1})$.
\end{itemize}
In either case, we have $x_n\in F(X,x_{n+1})$.  Moreover, if $x_n$~is
not an interior local minimum for some~$n$, then after some finite
number~$m$ of iterations the second case occurs, therefore
$x_n\in\rb F(X,x_{n+m})$. But this implies $\dim F(X,x_{n+m})>\dim F(X,x_n)$.
If $x_n$~were not an interior local minimum for any~$n$, for some~$n$
we would have $\dim F(X,x_n)>\dim X$, which is impossible.

Since $x_n\in F(X,x_{n+1})$ for all~$n$, the faces
$F(X,x_1)\subseteq F(X,x_2)\subseteq\cdots$ form a non-decreasing
chain. In particular, $x_1\in F(X,x_n)$ for all~$n$. Since there
is~$n$ such that $x_n$~is an interior local minimum, $x_1$~is a
pre-interior local minimum.
\end{proof}

\begin{theorem}
\label{th:captured}
Let $(x_n)$ be a sequence satisfying~\refeq{altopt} such that $x_1$~is
a pre-interior local minimum, i.e., $x_1\in F(X,x)$ for some interior
local minimum~$x$. Then for all~$n$ we have $x_n\in F(X,x)$ and
$f(x_n)=f(x_1)$.
\end{theorem}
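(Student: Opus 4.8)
The plan is to prove by induction on $n$ that $x_n \in F(X,x)$, from which $f(x_n) = f(x_1)$ will follow easily. The base case $n=1$ is the hypothesis. For the inductive step, suppose $x_n \in F(X,x)$. We know $x$ is an interior local minimum, so in particular $x \in \ri M(X\cap(x+I_n),f)$. By Lemma~\ref{th:xy-MF} applied with the subspace $I_n$ and the point $x_n \in F(X,x)$, we get $M(X\cap(x_n+I_n),f) \subseteq F(X,x)$. Since $(x_n)$ satisfies~\refeq{altopt}, we have $x_{n+1} \in M(X\cap(x_n+I_n),f)$, hence $x_{n+1} \in F(X,x)$, completing the induction. (Note this argument does not even need the full strength of~\refeq{altopt-ri} — it works for any sequence satisfying~\refeq{altopt}, which is exactly what the statement claims.)

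It remains to establish $f(x_n) = f(x_1)$ for all~$n$. Since $x$ is an interior local minimum, by Theorem~\ref{th:xF-locmin} applied to $x \in M(X\cap(x+I_n),f)$ and the point $x_n \in F(X,x)$, we obtain $x_n \in M(X\cap(x_n+I_n),f)$, so $f(x_n) \le f(x_{n+1})$. Combined with the trivial inequality $f(x_{n+1}) \le f(x_n)$ coming from $x_{n+1} \in M(X\cap(x_n+I_n),f)$, this gives $f(x_{n+1}) = f(x_n)$ for every~$n$, hence $f(x_n) = f(x_1)$ for all~$n$ by a further trivial induction.

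I do not expect a serious obstacle here: the heavy lifting was already done in Lemmas~\ref{th:xF-locmin} and~\ref{th:xy-MF}, which were evidently designed with exactly this application in mind. The one point requiring a little care is making sure the hypotheses of those lemmas are met at the subspace $I = I_n$ actually used in the $n$th iteration — this is fine because $x$ being an interior local minimum means $x \in \ri M(X\cap(x+I),f)$ for \emph{every} $I \in {\cal I}$, in particular for $I_n$. The conclusion then holds regardless of which element of~${\cal I}$ the sequence $(I_n)$ selects at each step, and no assumption that each direction is visited infinitely often is needed for this particular statement.
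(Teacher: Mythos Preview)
Your proof is correct. Both you and the paper argue by induction on~$n$, but you invoke Theorem~\ref{th:xF-locmin} and Lemma~\ref{th:xy-MF} as black boxes, whereas the paper's proof essentially re-derives their content inline: it constructs an auxiliary point $v\in\ri[u,x_{n+1}]\cap\ri[x,x+x_{n+1}-x_n]$ via Lemmas~\ref{th:ri-prolong} and~\ref{th:xyzu}, shows $f(v)=f(x)$ and hence $v\in M(X\cap(x+I_n),f)\subseteq F(X,x)$, and then uses the face property to pull $x_{n+1}$ into $F(X,x)$. Your route is more modular and makes clearer that Theorem~\ref{th:captured} is really an immediate corollary of the structural results in~\S\ref{sec:structure}; the paper's version is more self-contained but duplicates work already done. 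Your closing observation that no ``infinitely-often'' assumption on $(I_n)$ is needed here is also correct.
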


\begin{proof}
We will use induction on~$n$. The claim trivially holds for $n=1$. We
will show that for every~$n$, $x_n\in F(X,x)$ implies $x_{n+1}\in F(X,x)$ and
$f(x_{n+1})=f(x_n)$.

Let $x_n\in F(X,x)$. By Lemma~\ref{th:ri-prolong}, there is $u\in X$
such that $x\in\ri[x_n,u]$.  By Lemma~\ref{th:xyzu}, there is
\[
v\in\ri[u,x_{n+1}]\cap\ri[x,x+x_{n+1}-x_n].
\]
Since $u,x_{n+1}\in X$, we have $v\in X$.  Since $x_{n+1}-x_n\in I_n$,
we have $v\in x+I_n$. Since $x\in M(X\cap(x+I_n),f)$, this implies
$f(x)\le f(v)$.  Since $[x,x+x_{n+1}-x_n]=[x_n,x_{n+1}]+x-x_n$, by
linearity of~$f$ we have $f(x_n)\le f(x_{n+1})$. But
from~\refeq{altopt} we have also $f(x_{n+1})\le f(x_n)$, hence
$f(x_{n+1})=f(x_n)$. This in turn implies $f(v)=f(x)$.  Since
$x\in\ri M(X \cap (x+I_n),f)$, we have $v \in M(X \cap
(x+I_n),f)$. By Lemma~\ref{th:XYri}, $v\in F(X,x)$.  Since
$u,x_{n+1}\in X$ and $v\in F(X,x)$, the definition of face gives $x_{n+1}\in F(X,x)$.
\end{proof}

\begin{corollary}
Let $(x_n)$ be a sequence satisfying~\refeq{altopt-ri} such that
$x_1$~is a pre-interior local minimum. Then there exists~$n$ such that
$x_n$~is an interior local minimum.
\end{corollary}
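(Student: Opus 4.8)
The corollary follows by combining the two preceding theorems. Given a sequence $(x_n)$ satisfying~\refeq{altopt-ri} with $x_1$ a pre-interior local minimum, by definition there is an interior local minimum~$x$ with $x_1\in F(X,x)$. The idea is to apply Theorem~\ref{th:captured} to get that the objective is constant along the sequence, and then feed this into Theorem~\ref{th:iter-constant} to conclude that some~$x_n$ is an interior local minimum.

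First I would note that every sequence satisfying~\refeq{altopt-ri} also satisfies~\refeq{altopt}, since $\ri M(X\cap(x_n+I_n),f)\subseteq M(X\cap(x_n+I_n),f)$. Hence Theorem~\ref{th:captured} applies to $(x_n)$: because $x_1$ is a pre-interior local minimum, we get $f(x_n)=f(x_1)$ for all~$n$, i.e.\ $f(x_{n+1})=f(x_n)$ for all~$n$. Now $(x_n)$ is a sequence satisfying~\refeq{altopt-ri} with $f(x_{n+1})=f(x_n)$ for all~$n$, so Theorem~\ref{th:iter-constant} yields that there exists~$n$ such that $x_n$~is an interior local minimum, which is exactly the claim.

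I do not expect any real obstacle here — the corollary is essentially a one-line composition of Theorem~\ref{th:captured} (which handles the weaker iteration rule~\refeq{altopt}) and Theorem~\ref{th:iter-constant} (which handles~\refeq{altopt-ri} under the constant-objective hypothesis). The only point requiring a moment's care is the observation that~\refeq{altopt-ri} implies~\refeq{altopt}, so that the hypotheses of Theorem~\ref{th:captured} are met; this is immediate from the inclusion $\ri C\subseteq C$ for any set~$C$. So a complete proof would read: "Every sequence satisfying~\refeq{altopt-ri} also satisfies~\refeq{altopt}. By Theorem~\ref{th:captured}, $f(x_{n+1})=f(x_n)$ for all~$n$. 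By Theorem~\ref{th:iter-constant}, $x_n$~is an interior local minimum for some~$n$."
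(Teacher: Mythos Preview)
Your proposal is correct and matches the paper's own proof, which simply says ``First apply Theorem~\ref{th:captured} and then Theorem~\ref{th:iter-constant}.'' Your explicit remark that \refeq{altopt-ri} implies \refeq{altopt} (so that Theorem~\ref{th:captured} applies) is a welcome clarification of a step the paper leaves implicit.
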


\begin{proof}
First apply Theorem~\ref{th:captured} and then Theorem~\ref{th:iter-constant}.
\end{proof}

\begin{corollary}
Let $(x_n)$ be a sequence satisfying~\refeq{altopt-ri}. Then $x_1$~is
a pre-interior local minimum iff $f(x_n)=f(x_1)$ for all~$n$.
\end{corollary}

\begin{proof}
The `if' direction follows from Theorem~\ref{th:iter-constant}.
The `only-if' direction follows from Theorem~\ref{th:captured}.
\end{proof}

\subsection{Convergence}
\label{sec:limit-point}

So far, we have not examined the convergence properties of sequences
$(x_n)$ satisfying~\refeq{altopt-ri}. For that, we impose some
additional restrictions on the sequences $(x_n)$ and $(I_n)$. Namely,
we assume that the action of every iteration is continuous and the
elements of~$\cal I$ are visited in a regular order.

Formally, we assume that for each $I\in\cal I$ a continuous map
$p_I{:}\ X\to X$ is given that satisfies
\begin{equation}
p_I(x)\in\ri M(X\cap(x+I),f)
\label{eq:pI}
\end{equation}
for every $x\in X$. This map describes the action of one
iteration. Let the map
\begin{equation}
p_\sigma = p_{\sigma(1)} \circ \cdots \circ p_{\sigma(m)}
\label{eq:psigma}
\end{equation}
denote the action of one round of iterations, in which all elements
of~$\cal I$ are visited (some possibly more than once) in the order
given by a surjective map $\sigma{:}\ \{1\...m\}\to{\cal I}$ where
$m\ge{\cal I}$.

In Theorem~\ref{th:iter-constant}, the sequence $(I_n)$ is assumed to
contain every element of~$\cal I$ an infinite number of times. The
form of iterations given by~$p_\sigma$ gives a stronger property: each
element of~$\cal I$ is always visited again after at most
$m$~iterations. We adapt Theorem~\ref{th:iter-constant} to this
situation. For that, we denote $p=p_\sigma^{d+1}$ (i.e., $p$~is
obtained by composing~$p_\sigma$ with itself $(d+1)$-times) where
$d=\dim X$.

\begin{theorem}
\label{th:cycle}
Let $x\in X$ and $f(p(x))=f(x)$. Then $p(x)$ is an interior
local minimum and $x$~is a pre-interior local minimum.
\end{theorem}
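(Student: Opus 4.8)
The plan is to reduce Theorem~\ref{th:cycle} to Theorem~\ref{th:iter-constant} by manufacturing a suitable infinite sequence. First I would set up an auxiliary sequence $(x_n)$ satisfying~\refeq{altopt-ri} whose action is exactly the composition of the $p_I$'s. Concretely, starting from $x$, list out the $(d+1)m$ elementary steps making up $p=p_\sigma^{d+1}$, then continue periodically; this produces a sequence with $x_1=x$ and $x_{(d+1)m+1}=p(x)$. Because each $p_I$ satisfies $p_I(y)\in\ri M(X\cap(y+I),f)$, this sequence satisfies~\refeq{altopt-ri}, and the associated $(I_n)$ visits every element of $\cal I$ infinitely often (indeed at least once every $m$ steps). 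The key point to extract first is that $f$ is constant along the whole first period: since $f(p(x))=f(x)$ and $f$ is non-increasing along~\refeq{altopt-ri}-sequences (as noted after~\refeq{altopt}), we get $f(x_{n+1})=f(x_n)$ for $n=1\...(d+1)m$.

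Next I would invoke Theorem~\ref{th:iter-constant}. That theorem needs $f(x_{n+1})=f(x_n)$ for \emph{all}~$n$, so I would first arrange the sequence to be genuinely periodic with period $(d+1)m$: define $x_{n+(d+1)m}=x_n$ is wrong because $p$ applied to $p(x)$ need not return to $p(x)$; instead I simply note that $f(p^2(x))\le f(p(x))=f(x)\le f(p^2(x))$ forces $f(p^2(x))=f(p(x))$, and inductively $f$ is constant on the whole orbit $x,p_\sigma(x),p_\sigma^2(x),\ldots$. Hence the infinite sequence built by iterating the elementary steps forever does satisfy $f(x_{n+1})=f(x_n)$ for all~$n$. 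Theorem~\ref{th:iter-constant} then yields: there exists~$n$ with $x_n$ an interior local minimum, and $x_1=x$ is a pre-interior local minimum. This already gives the second conclusion of the theorem.

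It remains to upgrade ``some~$x_n$ is an interior local minimum'' to ``$p(x)$ is an interior local minimum.'' Here I would use the dimension-counting argument inside the proof of Theorem~\ref{th:iter-constant}: along the sequence, $F(X,x_1)\subseteq F(X,x_2)\subseteq\cdots$, and whenever $x_n$ fails to be an interior local minimum the dimension of $F(X,x_{n})$ strictly increases within at most~$m$ further steps (the second case there occurs because some $I\in\cal I$ violates the $\ri$-membership, and that $I$ recurs within $m$ iterations). Since $\dim X=d$, the dimension can strictly increase at most $d$ times, so after at most $dm$ elementary steps — i.e.\ certainly by step $dm+1\le(d+1)m$ — we reach an interior local minimum $x_k$ with $k\le dm+1$. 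Then every later $x_\ell$ with $\ell\ge k$ is also an interior local minimum by Theorem~\ref{th:n-iters-ri}; in particular $x_{(d+1)m+1}=p(x)$ is an interior local minimum. The main obstacle is this last bookkeeping: carefully justifying that $d+1$ rounds of~$p_\sigma$ (rather than infinitely many iterations) suffice, which is exactly why $p$ was defined as $p_\sigma^{d+1}$, and making the ``at most $m$ steps until $I$ recurs'' claim precise against the surjectivity of~$\sigma$.
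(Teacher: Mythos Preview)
Your approach is essentially the paper's---both rest on the dimension-counting argument from the proof of Theorem~\ref{th:iter-constant}---but one step is unjustified. You assert $f(x)\le f(p^2(x))$ in order to extend $f$-constancy from the first period $x_1,\ldots,x_{(d+1)m+1}$ to the whole infinite orbit, so that Theorem~\ref{th:iter-constant} applies verbatim. At that point, however, you only know $f(p^2(x))\le f(p(x))=f(x)$; the reverse inequality is not available. (It does become true \emph{after} you know $p(x)$ is an interior local minimum, via Theorem~\ref{th:n-iters-ri}, but that is precisely what you are trying to prove.) So your invocation of Theorem~\ref{th:iter-constant} to obtain the pre-interior conclusion is circular as written.

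The fix is to drop the detour through the infinite orbit and run your dimension argument directly on the finite segment $x_1,\ldots,x_{(d+1)m+1}$, where $f$-constancy \emph{is} guaranteed by the hypothesis and monotonicity. That is exactly your ``upgrade'' paragraph, and it stands on its own: it already yields that $p(x)=x_{(d+1)m+1}$ is an interior local minimum. The pre-interior conclusion for~$x$ then follows immediately from the face chain $x=x_1\in F(X,x_2)\subseteq\cdots\subseteq F(X,p(x))$ (which your argument also establishes), with no appeal to Theorem~\ref{th:iter-constant} needed. The paper does precisely this, only more compactly: it phrases the dichotomy at the level of full rounds~$p_\sigma$ rather than elementary steps~$p_I$, so the count reads ``$d+1$ rounds, each non-interior round forces a strict increase of $\dim F(X,\cdot)$, at most $d$ such increases are possible.''
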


\begin{proof}
By similar arguments as in the proof of
Theorem~\ref{th:iter-constant}, for every $x\in X$ it holds that:
\begin{itemize}
\item If $x$~is an interior local minimum, then $x\in\ri F(X,p_\sigma(x))$.
\item If $x$~is not an interior local minimum, then
$x\in\rb F(X,p_\sigma(x))$, hence $\dim F(X,p_\sigma(x))>\dim F(X,x)$.
\end{itemize}
Therefore, if $f(p(x))=f(x)$ and $p(x)$ were not an interior local minimum,
we would have $\dim F(X,p(x))>\dim X$, a contradiction. Since
$x\in F(X,p(x))$, $x$~is a pre-interior local minimum.
\end{proof}

Starting from some $x\in X$, we will examine convergence properties of
the sequence $(x_n)$ defined by $x_n=p^n(x)$. Recall that a {\em
  limit point\/} (also known as an accumulation point or cluster
point) of a sequence is the limit point of its converging subsequence.

\begin{theorem}
\label{th:accum}
Let $x\in X$. Let the sequence $(f(p^n(x)))_{n\in\bb N}$ be bounded.
Then every limit point $y$ of the sequence $(p^n(x))_{n\in\bb N}$
satisfies $f(p(y))=f(y)$.
\end{theorem}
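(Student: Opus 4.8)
The plan is to exploit the monotone, bounded sequence $(f(p^n(x)))_{n\in\bb N}$ together with continuity of $p$ (hence of $f\circ p$, since $f$ is linear and therefore continuous). First I would note that because each iteration $p_I$ satisfies $p_I(z)\in\ri M(X\cap(z+I),f)$, we have $f(p_I(z))\le f(z)$ for all $z$, hence $f(p_\sigma(z))\le f(z)$ and therefore $f(p(z))\le f(z)$ for all $z\in X$; iterating gives that $(f(p^n(x)))_n$ is non-increasing. Being also bounded (by hypothesis), it converges to some real number $L$, so $f(p^n(x))\to L$ as $n\to\infty$.

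Next, let $y$ be a limit point of $(p^n(x))_n$, say $p^{n_k}(x)\to y$ along a subsequence. By continuity of $f$, $f(y)=\lim_k f(p^{n_k}(x))=L$. By continuity of $p$ and then of $f\circ p$, $f(p(y))=\lim_k f(p(p^{n_k}(x)))=\lim_k f(p^{n_k+1}(x))=L$, where the last equality holds because $(n_k+1)$ is again a sequence of indices tending to infinity and the full sequence $f(p^n(x))$ converges to $L$. Therefore $f(p(y))=L=f(y)$, which is exactly the claim.

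I do not expect any real obstacle here; the statement is essentially the standard "the objective value stabilizes along limit points of a monotone descent iteration" argument. The only points requiring a little care are (i) making explicit that $f$ linear implies $f$ continuous, so that both $f$ and $f\circ p$ pass to the limit, and (ii) observing that $f(p^n(x))$ is monotone and bounded, hence Cauchy/convergent, which is what lets us evaluate the limit of the shifted subsequence $f(p^{n_k+1}(x))$ without knowing that $p^{n_k+1}(x)$ itself converges. The subsequent use of this theorem (combined with Theorem~\ref{th:cycle}) to conclude that every limit point is a pre-interior local minimum is presumably handled in the next corollary, not in this proof.
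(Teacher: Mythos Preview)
Your proposal is correct and follows essentially the same approach as the paper: use monotonicity plus boundedness to get convergence of $(f(p^n(x)))$ to some~$L$, pass to a subsequence converging to~$y$, and then use continuity of~$f$ and of~$p$ to conclude $f(y)=L=f(p(y))$. The paper's proof is organized slightly differently in presentation but the argument is the same, including the key observation that the shifted subsequence $f(p^{n_k+1}(x))$ has the same limit because the full sequence $(f(p^n(x)))$ already converges.
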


\begin{proof}
Let us denote $x_n=p^n(x)$. Let $y$~be a limit point of the
sequence $(x_n)$, i.e., for some strictly increasing function
$k{:}\ \bb N\to\bb N$ we have
\begin{equation}
\lim_{n\to\infty} x_{k(n)} = y .
\label{eq:accum-proof:1}
\end{equation}
Since $p$~is a composition of a finite number of continuous maps, it
is continuous. Applying~$p$ to~\refeq{accum-proof:1} yields
\begin{equation}
p\bigl(\, \lim_{n\to\infty} x_{k(n)} \bigr) = 
\lim_{n\to\infty} p(x_{k(n)})
= \lim_{n\to\infty} x_{k(n)+1}
= p(y) .
\label{eq:accum-proof:2}
\end{equation}
We show that
\begin{equation}
f(y)
= \lim_{n\to\infty} f(x_{k(n)})
= \lim_{n\to\infty} f(x_n)
= \lim_{n\to\infty} f(x_{k(n)+1})
= f(p(y)) .
\end{equation}
The first and last equality holds by applying the continuous
function~$f$ to equality~\refeq{accum-proof:1}
and~\refeq{accum-proof:2}, respectively. The second and third equality
hold because the sequence $(f(x_n))$ is convergent (being bounded
and non-increasing), hence every its subsequence converges to the same
point.
\end{proof}

\begin{corollary}
\label{th:accum'}
Let $x\in X$. Let the sequence $(f(p^n(x)))_{n\in\bb N}$ be bounded.
Then every limit point $y$ of the sequence
$(p^n(x))_{n\in\bb N}$ is a pre-interior local minimum.
\end{corollary}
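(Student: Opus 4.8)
The plan is to read off this corollary as an immediate consequence of the two preceding results, Theorem~\ref{th:accum} and Theorem~\ref{th:cycle}, with essentially no extra work.

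First I would fix a limit point $y$ of the sequence $(p^n(x))_{n\in\bb N}$. Since $p{:}\ X\to X$, every term $p^n(x)$ lies in~$X$, and $X$~is closed, so $y\in X$; this is the only hypothesis of Theorem~\ref{th:cycle} that needs checking. Next, applying Theorem~\ref{th:accum} (whose hypothesis, boundedness of $(f(p^n(x)))_{n\in\bb N}$, is exactly the hypothesis we are given) yields $f(p(y))=f(y)$. Finally, I would invoke Theorem~\ref{th:cycle} with the point~$y$ in place of~$x$: since $y\in X$ and $f(p(y))=f(y)$, the theorem gives that $y$~is a pre-interior local minimum (and, as a bonus, that $p(y)$~is an interior local minimum, though the corollary only records the first conclusion).

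There is no real obstacle here; the content lies entirely in Theorems~\ref{th:accum} and~\ref{th:cycle}, and the corollary is just their composition. The only point worth a moment's care is that the definition of limit point requires the accumulation point to be attained along a convergent subsequence whose terms lie in~$X$, so closedness of~$X$ is what legitimately places~$y$ back in the domain where Theorem~\ref{th:cycle} speaks.

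\begin{proof}
Let $y$~be a limit point of $(p^n(x))_{n\in\bb N}$. Since $p$~maps $X$ into~$X$ and $X$~is closed, $y\in X$. By Theorem~\ref{th:accum}, $f(p(y))=f(y)$. By Theorem~\ref{th:cycle} applied to~$y$, $y$~is a pre-interior local minimum.
\end{proof}
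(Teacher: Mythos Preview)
Your proof is correct and follows exactly the paper's own approach, which is simply ``Combine Theorems~\ref{th:accum} and~\ref{th:cycle}.'' Your additional remark that closedness of~$X$ ensures $y\in X$ is a helpful clarification the paper leaves implicit.
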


\begin{proof}
Combine Theorems~\ref{th:accum} and~\ref{th:cycle}.
\end{proof}

Let $d{:}\ X\times X\to\bb R_+$ be a metric on~$X$. Denote the
distance of a point $x\in V$ from a set $X\subseteq V$ as
\begin{equation}
d(X,x) = \inf_{y\in X} d(x,y) .
\label{eq:d}
\end{equation}


\begin{lemma}
\label{th:d-continuous}
For any $X\subseteq V$, the function $x\mapsto d(X,x)$ is Lipschitz, hence continuous.
\end{lemma}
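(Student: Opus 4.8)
The claim is that for any subset $X \subseteq V$ the function $x \mapsto d(X,x)$ is Lipschitz (with constant $1$), hence continuous. This is a standard fact; the proof rests entirely on the triangle inequality for the metric~$d$ together with the definition~\refeq{d} of $d(X,x)$ as an infimum. The plan is to take two arbitrary points $x, x' \in V$ and show $|d(X,x) - d(X,x')| \le d(x,x')$.

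Let me think about what I'd write. First I would fix $x, x' \in V$ and an arbitrary $y \in X$. By the triangle inequality, $d(x,y) \le d(x,x') + d(x',y)$. Taking the infimum over $y \in X$ on both sides — using that $d(X,x) = \inf_{y\in X} d(x,y) \le d(x,y)$ for each individual $y$ — I get $d(X,x) \le d(x,x') + d(x',y)$ for every $y \in X$, and then taking the infimum over $y$ on the right gives $d(X,x) \le d(x,x') + d(X,x')$, i.e. $d(X,x) - d(X,x') \le d(x,x')$. By symmetry (swapping the roles of $x$ and $x'$) we also get $d(X,x') - d(X,x) \le d(x',x) = d(x,x')$. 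Combining the two inequalities yields $|d(X,x) - d(X,x')| \le d(x,x')$, which is exactly Lipschitz continuity with constant~$1$; continuity is then immediate. (One should note the degenerate case $X = \emptyset$, where $d(X,x) = \inf \emptyset = +\infty$ for all $x$ and the statement is vacuous or trivially true; in the intended use $X$ is a nonempty closed convex set, so this is not an issue.)

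There is essentially no obstacle here — the only mild subtlety is the order of the two infimum-passages: one must first bound $d(X,x)$ by the fixed quantity $d(x,y)$, then apply the triangle inequality, and only afterwards take the infimum over $y$ on the right-hand side, since the left-hand side $d(X,x)$ no longer depends on~$y$. Writing it in the wrong order (taking both infima simultaneously) would be a logical slip. I expect the author's proof to be two or three lines along exactly these lines.
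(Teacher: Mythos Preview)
Your proof is correct and matches the paper's own argument essentially line for line: both use the triangle inequality $d(X,x)\le d(x,z)\le d(x,x')+d(x',z)$, take the infimum over the point $z\in X$ on the right, and then swap the roles of $x$ and~$x'$ to obtain the Lipschitz bound with constant~$1$. Your remark about the order of the infimum passages is exactly the small care the paper takes implicitly.
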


\begin{proof}
For all $x,y\in V$ and $z\in X$ we have
$d(X,x) \le d(x,z) \le d(x,y) + d(y,z)$.
Taking $\inf$ over~$z$ on the right gives
$d(X,x) \le d(x,y) + d(X,y)$. Swapping $x$ and~$y$ 
gives $|d(X,x)-d(X,y)| \le d(x,y)$.
\end{proof}

\begin{lemma}
\label{th:fY-bounded}
Let $X\subseteq V$ be closed, $Y\subseteq X$ bounded, and 
$f{:}\ X\to\bb R$ continuous. Then $f(Y)$ is bounded.
\end{lemma}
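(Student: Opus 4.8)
The plan is to reduce the statement to the Weierstrass extreme value theorem by replacing $Y$ with its closure. First I would observe that, since $V$ is finite-dimensional and $Y$ is bounded, the closure $\cl Y$ is a closed and bounded subset of~$V$, hence compact by the Heine--Borel theorem. Next I would note that $Y\subseteq X$ together with $X$ closed gives $\cl Y\subseteq\cl X=X$, so that $f$ is defined on all of~$\cl Y$ and its restriction to~$\cl Y$ is continuous on the compact set~$\cl Y$.

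From here the conclusion is immediate: a continuous real-valued function on a compact set has compact, in particular bounded, image, so $f(\cl Y)$ is bounded; and since $Y\subseteq\cl Y$ we get $f(Y)\subseteq f(\cl Y)$, which is therefore bounded as well.

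There is essentially no obstacle here; the only point that needs a moment's care is the role of the hypothesis that $X$ be closed. It is precisely what guarantees $\cl Y\subseteq X$, so that passing to the closure stays inside the domain of~$f$; without it, limit points of~$Y$ could lie outside~$X$, where $f$ carries no continuity, or even no value. Finite-dimensionality of~$V$ enters only through Heine--Borel. One could alternatively argue by contradiction, extracting from an unbounded sequence $(f(y_k))$ with $y_k\in Y$ a convergent subsequence $y_{k_j}\to y^\ast\in\cl Y\subseteq X$ and invoking continuity of~$f$ at~$y^\ast$; this is the same argument with the compactness packaged differently.
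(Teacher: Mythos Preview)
The proposal is correct and follows essentially the same route as the paper: pass to $\cl Y\subseteq\cl X=X$, use compactness of $\cl Y$ to conclude $f(\cl Y)$ is compact (hence bounded), and then note $f(Y)\subseteq f(\cl Y)$. Your write-up is just a more discursive version of the paper's three-line argument.
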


\begin{proof}
By monotonicity of closure, $\cl Y\subseteq \cl X=X$. The set $\cl Y$
is compact (closed and bounded), therefore $f(\cl Y)$ is also
compact. Hence $f(Y)\subseteq f(\cl Y)$ is bounded.
\end{proof}

\begin{lemma}
\label{th:subseq-conv}
A sequence in a metric space is convergent iff it is bounded and has a unique limit point.
\end{lemma}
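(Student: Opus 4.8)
The plan is to prove the two implications separately. The \emph{only-if} direction is routine; the \emph{if} direction is where the actual work lies, and it rests on the fact that in the present setting ($X$ a closed subset of the finite-dimensional space~$V$) every bounded sequence in~$X$ has a subsequence converging to a point of~$X$ (Bolzano--Weierstrass together with closedness of~$X$).

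\emph{Only-if.} Suppose $x_n\to x$. For boundedness, choose $N$ with $d(x_n,x)<1$ for all $n\ge N$; then every $x_n$ lies in the ball about~$x$ of radius $\max\{1,d(x_1,x)\...d(x_{N-1},x)\}$, so the range of $(x_n)$ is bounded. For uniqueness of the limit point, recall that every subsequence of a convergent sequence converges to the same limit~$x$; hence $x$ is a limit point of $(x_n)$ and it is the only one.

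\emph{If.} Let $(x_n)$ be bounded with (exactly one) limit point~$x$, and suppose for contradiction that $x_n\not\to x$. Then there is $\varepsilon>0$ and a subsequence $(x_{k(n)})$ with $d(x_{k(n)},x)\ge\varepsilon$ for every~$n$. This subsequence is again bounded, so it has a convergent sub-subsequence $x_{k(l(n))}\to y\in X$. Since $x_{k(l(n))}\to y$, the triangle inequality gives $d(x_{k(l(n))},x)\to d(y,x)$, whence $d(y,x)\ge\varepsilon>0$ and so $y\ne x$. But $y$ is a limit point of $(x_n)$, contradicting that $x$ is the unique limit point. Therefore $x_n\to x$.

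\emph{Where the subtlety lies.} The only nontrivial ingredient is the extraction of a convergent subsequence in the \emph{if} part, and it is genuinely needed: the statement is false for arbitrary metric spaces (in $\ell^2$ the sequence $0,e_1,0,e_2,\dots$ is bounded and has $0$ as its unique limit point, yet it diverges). It is legitimate here because $V$ is finite-dimensional and $X$ is closed, so a bounded sequence in~$X$ admits a subsequence converging within~$X$; everything else is elementary.
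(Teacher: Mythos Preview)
Your argument is correct and follows essentially the same route as the paper: assume non-convergence, extract a subsequence bounded away from the putative limit, apply Bolzano--Weierstrass to obtain a second limit point, and reach a contradiction. Your closing remark is also well taken: the lemma as stated (``in a metric space'') is literally false in general, and the paper's own proof tacitly relies on the same finite-dimensional Bolzano--Weierstrass step you single out.
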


\begin{proof}
The `only-if' direction is obvious. To see the `if' direction, let
$x$~be a limit point of a bounded sequence $(x_n)$. For
contradiction, suppose $(x_n)$ does not converge to~$x$.  Then for
some $\epsilon>0$, for every~$n_0$ there is $n>n_0$ such that
$d(x_n,x)>\epsilon$.  So there is a subsequence $(y_n)$ such that
$d(y_n,x)>\epsilon$ for all~$k$.  As $(y_n)$~is bounded, by
Bolzano-Weierstrass it has a convergent subsequence, $(z_n)$. But
$(z_n)$ clearly cannot converge to~$x$.
\end{proof}

\begin{theorem}
\label{th:d-converges}
Let $(x_n)_{n\in\bb N}$ be a bounded sequence from a closed set
$X\subseteq V$. Let $Y\subseteq X$ be such that every limit point of
$(x_n)$ is in~$Y$. Then $\lim\limits_{n\to\infty}d(Y,x_n)=0$.
\end{theorem}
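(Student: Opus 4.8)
The plan is to argue by contradiction using the Bolzano--Weierstrass theorem together with the hypothesis that all limit points lie in~$Y$. First I would suppose the conclusion fails, i.e.\ that $d(Y,x_n)$ does not converge to~$0$. Since $d(Y,x_n)\ge 0$, this means there is some $\epsilon>0$ and a strictly increasing function $k{:}\ \bb N\to\bb N$ such that $d(Y,x_{k(n)})\ge\epsilon$ for all~$n$. Thus $(x_{k(n)})$ is a subsequence of $(x_n)$ all of whose terms are at distance at least~$\epsilon$ from~$Y$.

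Next I would use boundedness: the subsequence $(x_{k(n)})$ is bounded (being a subsequence of the bounded sequence $(x_n)$), so by Bolzano--Weierstrass it has a convergent sub-subsequence, say $(x_{k(\ell(n))})$, with limit~$y$. This point~$y$ is a limit point of the original sequence $(x_n)$, so by hypothesis $y\in Y$, whence $d(Y,y)=0$. On the other hand, by Lemma~\ref{th:d-continuous} the function $x\mapsto d(Y,x)$ is continuous, so $d(Y,y)=\lim_{n\to\infty}d(Y,x_{k(\ell(n))})\ge\epsilon>0$, since every term of that sequence is $\ge\epsilon$. This contradicts $d(Y,y)=0$, completing the proof.

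There is essentially no hard part here; the statement is a routine compactness argument. The one point that needs a little care is ensuring the set over which the distance is measured is nonempty so that $d(Y,x_n)$ is well-defined and finite --- if $Y=\emptyset$ then the hypothesis ``every limit point of $(x_n)$ is in~$Y$'' forces $(x_n)$ to have no limit points, which contradicts boundedness via Bolzano--Weierstrass (assuming $X\neq\emptyset$, which holds since $(x_n)$ is a sequence in~$X$), so in fact $Y\neq\emptyset$ automatically. Alternatively one can simply note that the hypothesis is vacuous-free exactly because $(x_n)$ is bounded and hence has at least one limit point. I would mention this briefly and then give the contradiction argument above.

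Note also that Lemma~\ref{th:subseq-conv} already packages the Bolzano--Weierstrass step in a convenient form, but for this theorem it is cleaner to invoke Bolzano--Weierstrass directly on the subsequence $(x_{k(n)})$ rather than to route through that lemma, since we only need one convergent sub-subsequence and its limit, not a uniqueness statement.
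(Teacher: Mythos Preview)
Your argument is correct; this is the standard direct-contradiction proof via Bolzano--Weierstrass, and your handling of the edge case $Y=\emptyset$ is a nice touch that the paper omits. The paper takes a slightly different route: instead of arguing by contradiction, it first shows the real sequence $(d(Y,x_n))$ is bounded (invoking Lemmas~\ref{th:d-continuous} and~\ref{th:fY-bounded}), then appeals to Lemma~\ref{th:subseq-conv} to reduce the claim to showing that every convergent subsequence of $(d(Y,x_n))$ has limit~$0$, and finally verifies this by passing to a further subsequence of $(x_n)$ and using continuity of $x\mapsto d(Y,x)$. Your approach is more economical---it uses only Lemma~\ref{th:d-continuous} and a single pass through Bolzano--Weierstrass, avoiding Lemmas~\ref{th:fY-bounded} and~\ref{th:subseq-conv} entirely---whereas the paper's approach fits its sequence of auxiliary lemmas but is a bit roundabout for what is, as you say, a routine compactness argument.
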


\begin{proof}
By Lemmas~\ref{th:d-continuous} and~\ref{th:fY-bounded}, the sequence
$(d(Y,x_n))$ is bounded.
Thus it has a convergent subsequence, $(d(Y,y_n))$ where $(y_n)$ is a
subsequence of $(x_n)$. By Lemma~\ref{th:subseq-conv}, it
suffices to show that $\lim\limits_{n\to\infty}d(Y,y_n)=0$.

Being a subsequence of $(x_n)$, the sequence $(y_n)$ is
bounded. Therefore, it has a convergent subsequence, $(z_n)$. Thus,
$x=\lim\limits_{n\to\infty}z_n$ is a limit point of $(x_n)$. Therefore, $d(Y,x)=0$.
Applying the continuous function $x\mapsto d(Y,x)$ to this limit
yields $0=d(Y,x)=\lim\limits_{n\to\infty}d(Y,z_n)$.  Since the sequence
$(d(Y,y_n))$ is convergent, every its convergent subsequence converges
to the same number. Since $(d(Y,z_n))$ is one such subsequence, we have
$\lim\limits_{n\to\infty}d(Y,y_n)=\lim\limits_{n\to\infty}d(Y,z_n)=0$.
\end{proof}

\begin{corollary}
\label{th:e-converges}
Let $x\in X$. Let the sequence $(p^n(x))_{n\in\bb N}$ be bounded. Let
$Y$~be the set of all pre-interior local minima of~$f$ on~$X$. Then
$\lim\limits_{n\to\infty} d(Y,p^n(x))=0$.
\end{corollary}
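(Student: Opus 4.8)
The plan is to assemble this directly from the machinery already in place, since all the substantive work has been done in the preceding lemmas and theorems. The statement asserts that the iterates $p^n(x)$ approach the set $Y$ of pre-interior local minima, and the natural tool for a conclusion of this exact shape is Theorem~\ref{th:d-converges}. So the strategy is: take $(x_n) = (p^n(x))$ and $Y$ as in the statement, verify the two hypotheses of Theorem~\ref{th:d-converges} --- namely that $(x_n)$ is a bounded sequence in a closed set, and that every limit point of $(x_n)$ lies in $Y$ --- and then read off the conclusion.

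The first hypothesis is immediate: $(p^n(x))$ is bounded by assumption, and $X$ is closed, since throughout~\S\ref{sec:altopt-lin} we assume $X$ is closed convex (with $f$ linear, hence continuous). The second hypothesis is where the real content sits, but it too is already available via Corollary~\ref{th:accum'}. That corollary requires the sequence of objective values $(f(p^n(x)))_{n\in\bb N}$ to be bounded, which is not quite the hypothesis we are given; to bridge this gap I would apply Lemma~\ref{th:fY-bounded} to the bounded set $\{\,p^n(x)\mid n\in\bb N\,\}\subseteq X$ together with the continuous function $f$, concluding that $(f(p^n(x)))$ is indeed bounded. With that in hand, Corollary~\ref{th:accum'} yields that every limit point of $(p^n(x))$ is a pre-interior local minimum, i.e., belongs to $Y$.

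Once both hypotheses of Theorem~\ref{th:d-converges} are confirmed, the conclusion $\lim_{n\to\infty} d(Y,p^n(x)) = 0$ follows with no further argument. I do not expect a genuine obstacle here; the single point requiring a moment's attention is exactly the passage just described --- upgrading boundedness of the \emph{iterates} to boundedness of the \emph{objective values}, via Lemma~\ref{th:fY-bounded} --- after which Corollary~\ref{th:accum'} and Theorem~\ref{th:d-converges} do all the work.
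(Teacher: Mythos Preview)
Your proposal is correct and follows exactly the paper's approach, which is simply to combine Theorem~\ref{th:d-converges} with Corollary~\ref{th:accum'}. You are in fact slightly more explicit than the paper in pointing out that Lemma~\ref{th:fY-bounded} is needed to pass from boundedness of $(p^n(x))$ to boundedness of $(f(p^n(x)))$ before invoking Corollary~\ref{th:accum'}.
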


\begin{proof}
Combine Theorem~\ref{th:d-converges} and Corollary~\ref{th:accum'}.
\end{proof}

For the sequence $(p^n(x))_{n\in\bb N}$ to be bounded, it clearly
suffices that $X$~is bounded. But there is a weaker sufficient
condition: as the sequence $(f(p^n(x)))_{n\in\bb N}$ is
non-increasing, it suffices that the set
$X\cap\{\,y\in V\mid f(y)\le f(x)\,\}$ is bounded (note that
$\{\,y\in V\mid f(y)\le f(x)\,\}$ is the half-space whose boundary is
the contour of~$f$ passing through the initial point~$x$).

\section{Non-linear Objective Function}
\label{sec:epigraph}

As we said, the minimization of a convex function on a convex set can
be transformed to the epigraph form, which is the minimization of a
linear function on a convex set. Here we show that this transformation
allows us to generalize the results from~\S\ref{sec:altopt-lin} to
non-linear convex objective functions.

The {\em epigraph\/} of a function $f{:}\ X\to\bb R$ is the set
\begin{equation}
\epi f = \{\,(x,t)\in X\times\bb R\mid f(x)\le t\,\} .
\label{eq:epigraph}
\end{equation}
If $X\subseteq V$ is closed convex and $f$~is convex, then $\epi f$ is
closed convex. 
We have
\begin{equation}
\min_{x\in X}f(x) \;=\; \min_{(x,t)\in\epi f} t \;=\; \min_{\bar x\in\epi f}\pi(\bar x)
\label{eq:epiform-orig}
\end{equation}
where $\pi{:}\ V\times\bb R\to\bb R$ is the linear function defined by
$\pi(x,t)=t$, i.e., the projection on the $t$-coordinate. For every
$(x,t)\in M(\epi f,\pi)$ we have $t=f(x)$, i.e., $t$~is the minimum
value of~$f$ on~$X$. Moreover,
\begin{IEEEeqnarray}{rCr}
\label{eq:epiform'}
M(X,f) \times \{t\} &=& M(\epi f,\pi) , \IEEEsubnumstart\\
\ri M(X,f) \times \{t\} &=& \ri M(\epi f,\pi) ,
\end{IEEEeqnarray}
which can equivalently be written as
\begin{IEEEeqnarray}{rCrCrCr}
\label{eq:epiform}
x &\in& M(X,f) &\quad\Longleftrightarrow\quad& (x,f(x)) &\in& M(\epi f,\pi) , \IEEEsubnumstart\\
x &\in& \ri M(X,f) &\quad\Longleftrightarrow\quad& (x,f(x)) &\in& \ri M(\epi f,\pi) .
\end{IEEEeqnarray}

The following lemma will allow us to show that the concepts of local
minima and the updates~\refeq{altopt} and~\refeq{altopt-ri} remain
`the same' if we pass to the epigraph form, provided that instead of a
subspace~$I$ we use the subspace $\bar I=I\times\bb R$.
To illustrate this, consider the case $X=V=\bb R^d$ and
coordinate minimization. In every iteration, we minimize $f(x_1\...x_d)$
over a single variable~$x_i$. In the epigraph form, we would
minimize~$t$ subject to $f(x_1\...x_d)\le t$ over the pair
$(x_i,t)$. Clearly, both forms are equivalent.

\begin{lemma}
\label{th:epiform-Y}
Let $X\subseteq V$ be convex, $f{:}\ X\to\bb R$ be convex. Let
$I\subseteq V$ be a subspace and $\bar I=I\times\bb R$. Let
$\bar x=(x,t)\in \epi f$ and $y\in X$. Then
\begin{IEEEeqnarray}{rCrCrCr}
\label{eq:epiform-Y}
y &\in& M(X\cap(x+I),f) &\quad\Longleftrightarrow\quad& (y,f(y)) &\in& M(\epi f\cap(\bar x+\bar I),\pi) , \IEEEsubnumstart\\
y &\in& \ri M(X\cap(x+I),f) &\quad\Longleftrightarrow\quad& (y,f(y)) &\in& \ri M(\epi f\cap(\bar x+\bar I),\pi) .
\end{IEEEeqnarray}
\end{lemma}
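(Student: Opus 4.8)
The plan is to establish the key set-theoretic identity relating the two optimization problems, and then invoke the already-proved correspondences \refeq{epiform'} and \refeq{epiform}. The crucial observation is that intersecting with the affine subspace $\bar x+\bar I$ in the epigraph space is compatible with taking the epigraph: I claim that
\begin{equation}
\epi f \cap (\bar x+\bar I) = \epi\bigl(f|_{X\cap(x+I)}\bigr) ,
\label{eq:epi-restrict}
\end{equation}
where $f|_{X\cap(x+I)}$ denotes the restriction of $f$ to the set $X\cap(x+I)$. Indeed, a point $(y,s)$ lies in the left-hand side iff $y\in X$, $f(y)\le s$, and $(y,s)-(x,t)\in I\times\bb R$; the last condition says $y-x\in I$ (the $\bb R$-component is unconstrained), i.e.\ $y\in x+I$. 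So $(y,s)$ is in the left-hand side iff $y\in X\cap(x+I)$ and $f(y)\le s$, which is exactly membership in the right-hand side.

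With \refeq{epi-restrict} in hand, the rest is bookkeeping. First I would note that $X\cap(x+I)$ is convex (intersection of a convex set with an affine set) and $f$ restricted to it is convex, so the hypotheses needed to apply \refeq{epiform'}/\refeq{epiform} to the pair $(X\cap(x+I),f)$ are met. Applying \refeq{epiform} with $X$ replaced by $X\cap(x+I)$ and using \refeq{epi-restrict} to rewrite $\epi(f|_{X\cap(x+I)})$ as $\epi f\cap(\bar x+\bar I)$, I get
\begin{equation}
y\in M(X\cap(x+I),f) \;\Longleftrightarrow\; (y,f(y))\in M\bigl(\epi f\cap(\bar x+\bar I),\pi\bigr) ,
\nonumber
\end{equation}
which is the first equivalence of \refeq{epiform-Y}; the analogous application of the relative-interior version of \refeq{epiform} gives the second equivalence. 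One small point to check is that the ambient space in \refeq{epiform} is arbitrary finite-dimensional, so there is no difficulty in applying it with the ground set being the affine slice rather than all of $V$; alternatively one observes that $\pi$ restricted to $\epi f\cap(\bar x+\bar I)$ is still just the projection onto the last coordinate, so \refeq{epiform} applies verbatim.

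I expect the only genuine subtlety — and hence the main thing to get right — is the verification of \refeq{epi-restrict}, specifically making sure the $\bb R$-direction really is free in $\bar I=I\times\bb R$ so that the slice $\bar x+\bar I$ imposes no constraint on the function value $s$, only on the point $y$. If instead one had used $\bar I=I\times\{0\}$, the identity would fail and the whole correspondence would break; it is precisely the extra $\bb R$ factor that makes the epigraph transformation transparent, which is why the lemma statement is careful to fix $\bar I=I\times\bb R$. Everything else — convexity of the restricted problem, the fact that at a minimizer $(y,s)$ of $\pi$ one necessarily has $s=f(y)$ (so the pairing $y\leftrightarrow(y,f(y))$ is the right one), and closedness (not needed here since the lemma only assumes convexity) — is routine and already packaged in the displayed facts preceding the lemma.
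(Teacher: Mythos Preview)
Your proof is correct and follows essentially the same route as the paper: both establish the identity $\epi f\cap(\bar x+\bar I)=\epi f|_{X\cap(x+I)}$ (the paper does this slightly more abstractly via $\epi f\cap(Y\times\bb R)=\epi f|_{X\cap Y}$ for arbitrary~$Y$, then specializes to $Y=x+I$), and then observe that \refeq{epiform-Y} is nothing but \refeq{epiform} applied to the restricted function $f|_{X\cap(x+I)}$. Your additional remarks on why the $\bb R$ factor in~$\bar I$ is essential are accurate and illuminating, though not strictly needed for the argument.
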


\begin{proof}
One can verify from~\refeq{epigraph} that for every $Y\subseteq V$ we have
\begin{align*}
\epi f\cap(Y\times\bb R)
&= \epi f|_{X\cap Y}
\end{align*}
where $f|_{X\cap Y}$ denotes the restriction of the function~$f$ to
the set $X\cap Y$. Since
$\bar x+\bar I = (x,t')+(I\times\bb R) = (x+I)\times(t'+\bb R) =
(x+I)\times\bb R$, we thus have
$\epi f\cap(\bar x+\bar I) = \epi f|_{X\cap(x+I)}$.  We see that
\refeq{epiform-Y} are \refeq{epiform}, applied to the function
$f|_{X\cap(x+I)}$.
\end{proof}

By letting $y=x$ and $t=f(x)$, the lemma shows that $x$~is an
[interior] local minimum of~$f$ on~$X$ with respect to~$\cal I$ iff
$(x,f(x))$ is an [interior] local minimum of~$\pi$ on $\epi f$ with
respect to $\bar{\cal I}=\{\,I\times\bb R\mid I\in{\cal I}\,\}$.
Similarly, the results from~\S\ref{sec:iterations}
and~\S\ref{sec:limit-point} can be extended from linear to non-linear
convex functions~$f$.

\end{document}